\documentclass[12pt, reqno]{amsart}

\usepackage[margin=1.1in]{geometry}

\usepackage{amssymb,upref}
\usepackage{amsmath, amsthm}
\usepackage{enumerate, graphicx, float}
\usepackage{relsize, mathrsfs, upgreek, bbm, eufrak}

\usepackage{mathtools}

\usepackage{color}
\usepackage{esint}

\theoremstyle{plain}

\newtheorem{thm}{Theorem}[section]
\newtheorem{lem}[thm]{Lemma}
\newtheorem{coro}[thm]{Corollary}

\theoremstyle{remark}

\newtheorem{remark}[thm]{Remark}
\numberwithin{equation}{section}

\def\XXint#1#2#3{{\setbox0=\hbox{$#1{#2#3}{\int}$ }
\vcenter{\hbox{$#2#3$ }}\kern-.6\wd0}}

\newcommand{\bee}{\begin{equation}}
\newcommand{\eee}{\end{equation}}
\newcommand{\be}{\begin{equation*}}
\newcommand{\ee}{\end{equation*}}

\newcommand{\eps}{\varepsilon}

\newcommand{\na}{\mathbb{N}}
\newcommand{\re}{\mathbb{R}}
\newcommand{\com}{\mathbb{C}}

\newcommand{\rn}{\mathbb{R}^n}

\newcommand{\sn}{\mathbb{S}^{n-1}}

\newcommand{\norm}[2]{\|#1\|_{#2}}

\newcommand{\dx}{\, dx}

\newcommand{\Ainf}{A_\infty(\rn)}

\DeclareMathOperator*{\essinf}{ess\,inf\,}

\begin{document}

\subjclass[2010]{Primary 42B37; Secondary 42B99.}

\keywords{Muckenhoupt weights, interpolation inequalities, weighted inequalities.}

\address{Diego Maldonado, Kansas State University, Department of Mathematics. 138 Cardwell Hall, Manhattan, KS-66506, USA.} \email{dmaldona@ksu.edu}

\address{Javier Soria, Departamento de An\'alisis Matem\'atico y Matem\'atica Aplicada, Fa\-cul\-tad de Matem\'aticas, Universidad Complutense de Madrid, Plaza de Ciencias 3, 28040 Madrid, Spain and ICMAT.}
\email{javier.soria@ucm.es}

\title[]{Anisotropic Caffarelli-Kohn-Nirenberg inequalities with higher-order fractional derivatives}
\author[Diego Maldonado and Javier Soria]{Diego Maldonado$^{*}$ and Javier Soria$^{**}$}

\thanks{$^{*}$Partially supported by Simons Foundation grant MPS-TSM-00007229.$^{**}$Partially supported by grants PID2024-155917NB-I00 and CEX-2023-001347-S, funded by MCIN/AEI/ 10.13039/501100011033, and Grupo UCM-970966.}

\date{\today}

\begin{abstract} We prove a family of anisotropic Caffarelli-Kohn-Nirenberg interpolation inequalities involving higher-order fractional derivatives and weights of the form $|x'|^{\theta_1}|x|^{\theta_2}|x_n|^{\theta_3}$.
\end{abstract}

\maketitle

\section{Introduction and main result}
In their foundational work \cite{CKN1}, L.~A.~Caffarelli, R. Kohn, and L. Nirenberg characterized the indices $1 \leq p, q \leq \infty$, $0 < r \leq \infty$, and exponents $\alpha, \beta, \gamma \in \re$ such that, for each $\theta \in [0,1]$ the first-order weighted interpolation inequality
\begin{equation}\label{CKN}
\norm{|x|^\gamma f}{L^r} \lesssim \norm{|x|^\alpha |D f|}{L^p}^\theta  \norm{|x|^\beta f}{L^q}^{1-\theta}
\end{equation}
holds true for every $f$ in the Schwartz class $\mathcal{S}(\rn)$. Later on, in \cite{Lin86} C.-S. Lin extended \eqref{CKN} as to include higher-order derivatives by providing necessary and sufficient conditions on $1 \leq p, q \leq \infty$, $0 < r \leq \infty$, $\alpha, \beta, \gamma \in \re$, $j \in \na_0$, $m \in \na$, with $j \leq m$, for the validity of the higher-order weighted interpolation inequality
\begin{equation*}
\norm{|x|^\gamma |D^j f|}{L^r} \lesssim \norm{|x|^\alpha |D^m f|}{L^p}^\theta  \norm{|x|^\beta f}{L^q}^{1-\theta}, \quad \forall f \in \mathcal{S}(\rn), \theta \in [j/m, 1].
\end{equation*}
More recently, in \cite[Theorem 1.6]{DD23} R. Duarte and J. Drumond Silva extended Lin's result by identifying indices $1 < p, q, r < \infty$; $0 \leq t < s$ (smoothness indices, not necessarily integers), $t/s \leq \theta \leq 1$; $\alpha, \beta, \gamma \in \re$, such that the higher-order fractional-derivative weighted interpolation inequality
$$
\norm{|x|^\gamma |D^t f|}{L^r} \lesssim \norm{|x|^\alpha |D^s f|}{L^p}^\theta  \norm{|x|^\beta f}{L^q}^{1-\theta}, 
$$
holds true for every $f \in \mathcal{S}(\rn)$. When $n \geq 2$ and in the context of asymptotic stability of solutions to the Navier-Stokes equations, in \cite[Theorem 1.1]{LiYan} Y. Y.  Li and X. Yan introduced an anisotropic version of the Caffarelli-Kohn-Nirenberg inequality \eqref{CKN} by determining the indices  $1 \leq p, q \leq \infty$, $0 < r \leq \infty$, and exponents $\gamma_1, \gamma_2, \gamma_3, \alpha, \mu, \beta$ such that 
\begin{equation*}
\norm{|x|^{\gamma_1} |x'|^{\alpha} f}{L^r} \lesssim \norm{|x|^{\gamma_2} |x'|^{\mu} |\nabla f|}{L^p}^\theta  \norm{|x|^{\gamma_3} |x'|^{\beta} f}{L^q}^{1-\theta}, \quad \forall f \in \mathcal{S}(\rn), \theta \in [0, 1],
\end{equation*}
where $x=(x', x_n) \in \rn$ and $x':=(x_1,\ldots, x_{n-1})$. Always with $n \geq 2$, in their study on the regularity of solutions to $p$-Laplace equations, in \cite{MZarXiv1} C. X. Miao and Z. W. Zhao identified indices $1 < p < q$ and exponents $\theta_1, \theta_2, \theta_3 \in \re$ yielding the anisotropic Sobolev inequality 
\begin{equation}\label{Sob:MZ}
\norm{f}{L^q(u_{\theta_1, \theta_2, \theta_3})} \lesssim \norm{\nabla f}{L^p(u_{\theta_1, \theta_2, \theta_3})}, \quad \forall f \in \mathcal{S}(\rn),
\end{equation}
where 
\begin{equation}\label{def:u:thetas}
u_{\theta_1, \theta_2, \theta_3}(x):= |x'|^{\theta_1}|x|^{\theta_2}|x_n|^{\theta_3}, 
\end{equation}
see \cite[Corollary 2.9]{MZarXiv1}. As a key element in their approach, given $1 < p < \infty$, Miao and Zhao introduced in \cite{MZarXiv1} the exponent classes
\begin{align*}
\begin{cases}
&\mathcal{A}:=\{(\theta_{1},\theta_{2},\theta_{3}): \theta_{1}>-(n-1),\,\theta_{2}\geq0,\,\theta_{3}>-1\},\\
&\mathcal{B}:=\{(\theta_{1},\theta_{2},\theta_{3}):\theta_{1}>-(n-1),\,\theta_{2}<0,\,\theta_{3}>-1,\,\theta_{1}+\theta_{2}+\theta_{3}>-n\},\\
&\mathcal{C}_{p}:=\{(\theta_{1},\theta_{2},\theta_{3}):\theta_{1}<(n-1)(p-1),\,\theta_{2}\leq0,\,\theta_{3}<p-1\},\\
&\mathcal{D}_{p}:=\{(\theta_{1},\theta_{2},\theta_{3}):\theta_{1}<(n-1)(p-1),\,\theta_{2}>0,\,\theta_{3}<p-1,\,\theta_{1}+\theta_{2}+\theta_{3}<n(p-1)\},
\end{cases}
\end{align*}
to describe the membership of the weight $u_{\theta_1, \theta_2, \theta_3}$ as defined in \eqref{def:u:thetas} to the Muckenhoupt class $A_p(\rn)$. More precisely, by \cite[Theorem 3.2 ]{MZarXiv1}, $u_{\theta_1, \theta_2, \theta_3} \in A_p(\rn)$ if and only if $(\theta_{1},\theta_{2},\theta_{3})\in (\mathcal{A}\cup\mathcal{B})\cap(\mathcal{C}_{p}\cup\mathcal{D}_{p})$, see Section \ref{secc:prelim} below. On the other hand, as proved in Lemma \ref{lemma:L1loc:indices} in Section \ref{secc:prelim}, $(\theta_1, \theta_2, \theta_3) \in \mathcal{A}\cup\mathcal{B}$ means exactly that $u_{\theta_1, \theta_2, \theta_3} \in L^1_{\rm{loc}}(\rn)$. For the sake of coherence, we have kept the notations used in \cite{MZarXiv1}, although it is easy to prove that, in fact:
\begin{align*}
\mathcal{A}\cup\mathcal{B}&=\{(\theta_{1},\theta_{2},\theta_{3}):  \theta_{1}>-(n-1),\,\theta_{3}>-1,\,\theta_{1}+\theta_{2}+\theta_{3}>-n\},\\
\mathcal{C}_p\cup\mathcal{D}_p&=\{(\theta_{1},\theta_{2},\theta_{3}):  \theta_{1}<(n-1)(p-1),\,\theta_{3}<p-1,\,\theta_{1}+\theta_{2}+\theta_{3}<n(p-1)\}.
\end{align*}

The purpose of this work is to provide a full scale of anisotropic interpolation inequalities based on weights of the form $u_{\theta_1, \theta_2, \theta_3}$ and involving the homogeneous higher-order fractional derivative operator $D^s$ (defined as $\widehat{D^sf}:= |\cdot|^s \hat{f}$ for $s > 0$). This scale of anisotropic interpolation inequalities extends all the aforementioned ones. Our main results are the following Theorems \ref{thm:t/s}, \ref{thm:theta=1}, and \ref{thm:main}.

\begin{thm}\label{thm:t/s} Fix $1 < p, q, r < \infty$ and $0 < t < s$. Set $\theta:=t/s \in (0,1)$ and suppose that 
\begin{equation}\label{rel:pqrst}
\frac{1}{r}= \frac{\theta}{p} +\frac{(1 - \theta)}{q}.
\end{equation}
Let $\alpha_0, \alpha_1, \beta_0,  \beta_1, \gamma_0, \gamma_1 \in \re$ satisfy 
\begin{align}\label{p:1:p}
 p(\alpha_1, \beta_1, \gamma_1) & \in (\mathcal{A}\cup\mathcal{B})\cap(\mathcal{C}_{p}\cup\mathcal{D}_{p}),\\\label{q:0:q}
q (\alpha_0, \beta_0, \gamma_0)  & \in (\mathcal{A}\cup\mathcal{B})\cap(\mathcal{C}_{q}\cup\mathcal{D}_{q}), and\\\label{aj:bk:gl:L1loc}
(\alpha_j, \beta_k, \gamma_\ell) & \in \mathcal{A}\cup\mathcal{B}, \quad \text{ for } j,k,\ell =0,1. 
\end{align}
Then,
\begin{equation}\label{GN:theta:t/s}
\norm{|x'|^{\alpha_\theta}|x|^{\beta_\theta}|x_n|^{\gamma_\theta}   D^t f}{L^r} \lesssim \norm{|x'|^{\alpha_1}|x|^{\beta_1}|x_n|^{\gamma_1}  D^s f}{L^p}^{\theta} \norm{|x'|^{\alpha_0}|x|^{\beta_0}|x_n|^{\gamma_0}  f}{L^q}^{1-\theta},
\end{equation}
for every $f \in \mathcal{S}(\rn)$, where $(\alpha_\theta, \beta_\theta, \gamma_\theta) :=  (\alpha_1, \beta_1, \gamma_1) \theta +  (\alpha_0, \beta_0, \gamma_0)(1-\theta)$. 
\end{thm}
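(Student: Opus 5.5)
We plan to prove \eqref{GN:theta:t/s} by complex interpolation (Stein's analytic family theorem), applied to the family $D^{sz}$ together with the analytic family of weights $w_z := u_{(\alpha_1,\beta_1,\gamma_1)z + (\alpha_0,\beta_0,\gamma_0)(1-z)}$ on the strip $0 \le \operatorname{Re} z \le 1$. The scheme is the following. Set $w_j:=|x'|^{\alpha_j}|x|^{\beta_j}|x_n|^{\gamma_j}$ for $j=0,1$. By \eqref{p:1:p} and \eqref{q:0:q}, together with \cite[Theorem 3.2]{MZarXiv1}, we have $w_1^p\in A_p(\rn)$ and $w_0^q\in A_q(\rn)$. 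The two endpoint bounds are then as follows.

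\begin{itemize}
\item On the line $\operatorname{Re} z=0$ the operator $D^{i\tau s}$ is a Calder\'on--Zygmund (Mikhlin) multiplier operator with norm growing at most polynomially in $|\tau|$. Hence it is bounded on $L^q(w_0^q)$, since $w_0^q\in A_q$.
\item On $\operatorname{Re} z=1$ we have $D^{(1+i\tau)s}f = D^{i\tau s}D^s f$. The same argument, now using $w_1^p\in A_p$, gives the bound in terms of $\norm{w_1 D^sf}{L^p}$.
\end{itemize}

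Concretely, fix $f\in\mathcal S(\rn)$ and a simple test function $g$ with $\norm{g}{L^{r'}}\le 1$. We consider
$$
F(z):=\int_{\rn} w_z(x)\, D^{sz}f(x)\, g_z(x)\,dx,
$$
where $g_z:=|g|^{r'(\frac{1-z}{q'}+\frac{z}{p'})}\operatorname{sgn} g$ is the usual analytic rescaling. The function $F$ is analytic in the open strip; this is where \eqref{aj:bk:gl:L1loc} enters. It guarantees that every weight $|x'|^{\alpha_j}|x|^{\beta_k}|x_n|^{\gamma_\ell}$, which dominates $|w_z|$ locally, is in $L^1_{\rm loc}$ by Lemma \ref{lemma:L1loc:indices}. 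Then $F$ is well defined and continuous up to the boundary, and it has admissible growth, since $D^{sz}f$ is bounded and decays like $|x|^{-n-s\operatorname{Re}z}$ at infinity.

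The endpoint bounds give $|F(i\tau)|\lesssim P(\tau)\norm{w_0f}{L^q}$ and $|F(1+i\tau)|\lesssim P(\tau)\norm{w_1D^sf}{L^p}$, where $P$ has polynomial growth, by H\"older with the relation \eqref{rel:pqrst}. The three lines lemma, in Hirschman's form, then bounds $|F(\theta)|$ by the product $\norm{w_1D^sf}{L^p}^\theta\norm{w_0f}{L^q}^{1-\theta}$. Taking the supremum over $g$ yields \eqref{GN:theta:t/s}, because $D^{s\theta}=D^t$ and $w_\theta$ is exactly the interpolated weight.

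The main obstacle is justifying analyticity and growth of $F$ near the singular sets $\{x'=0\}$, $\{x_n=0\}$ and $\{x=0\}$, and at infinity. There $w_z$ may be singular, and $D^{sz}f$ is not Schwartz; it only decays like $|x|^{-n-s\operatorname{Re} z}$. We will handle this in three steps.

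\begin{itemize}
\item First establish the inequality for $f$ with $\hat f$ supported away from the origin, so that $D^{sz}f\in\mathcal S$ uniformly on compact $z$-sets.
\item Bound $|w_z|$ on the strip by the sum of the eight "vertex" weights in \eqref{aj:bk:gl:L1loc}. These are locally integrable, and integrability against Schwartz functions follows from polynomial growth.
\item Pass to general $f\in\mathcal S$ by a density argument, applying Fatou on the left-hand side.
\end{itemize}

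The Fatou step requires that the right-hand side converges. We will get this from $A_p$- and $A_q$-weighted continuity of Littlewood--Paley cutoffs, which again holds by the Muckenhoupt conditions.
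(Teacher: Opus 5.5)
Your proposal is correct and is essentially the paper's argument. The paper uses the same analytic family $w_z$, gets $w_1^p\in A_p(\rn)$ and $w_0^q\in A_q(\rn)$ from Theorem \ref{thm:Ainf:indices}, and dominates $|w_z|$ by the sum of the eight vertex weights via Lemma \ref{lemma:L1loc:indices} and \eqref{aj:bk:gl:L1loc}; the only difference is that the paper quotes the complex-interpolation step as a black box (Theorem \ref{thm:2.14:DD23} with $u=w_1$, $v=w_0$), whereas you re-derive it through weighted bounds for $D^{i\tau s}$ and Hirschman's lemma (your frequency-cutoff/density step can be dropped if you take $g$ compactly supported, since $D^{sz}f$ is then bounded uniformly on the strip).
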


For the statement of our next theorems, given $1 < p \leq q < \infty$ and $0 < \alpha < n$ define
\begin{equation}\label{def:alpha0}
\Theta_{p,q}(\alpha):= \alpha + n \left( \frac{1}{q} - \frac{1}{p}\right).
\end{equation}
As pointed out in Remark \ref{rmk:Theta:>=0} below, we can assume $\Theta_{p,q}(\alpha) \geq 0$. The cases $\Theta_{p,q}(\alpha) =0$ (that is, $q = np/(n-\alpha p)$) and $\Theta_{p,q}(\alpha) >0$ (that is, $q < np/(n-\alpha p)$) are referred to as the \emph{critical} and \emph{subcritical} cases, respectively, for the indices $p, q, \alpha$. Our next two theorems cover both the critical and subcritical cases. Firstly, we have the following weighted Sobolev inequality, which corresponds to the case $\theta =1$ in the interpolation inequality.

\begin{thm}\label{thm:theta=1} Fix $1 < p \leq q < \infty$, $t <s < t +n$, and let $\theta_1, \theta_2, \theta_3, \sigma_1, \sigma_2, \sigma_3 \in \re$ satisfy
\begin{equation}\label{sigma:theta:Ainf}
-\frac{p'}{p}(\sigma_1, \sigma_2, \sigma_3), (\theta_1, \theta_2, \theta_3) \in \mathcal{A}\cup\mathcal{B}.
\end{equation}
\begin{enumerate}[(i)]
\item\label{Theta =0} If $\Theta_{p,q}(s-t):= s - t + n \left( \frac{1}{q} - \frac{1}{p}\right) = 0$ and 
\begin{equation}\label{theta:j:q:sigma:j:p}
\frac{\theta_j}{q} = \frac{\sigma_j}{p}, \quad \text{for } j=1,2,3;
\end{equation}
\item\label{Theta >0} or if $\Theta_{p,q}(s-t):= s - t + n \left( \frac{1}{q} - \frac{1}{p}\right) >0$ and
\begin{align}\label{sum:sigmas:thetas:alpha}
\frac{\theta_1+ \theta_2 + \theta_3}{q} -\frac{\sigma_1+ \sigma_2 + \sigma_3}{p} &=- \Theta_{p,q}(s-t),\\\label{sum:sigmas:thetas:alpha:2}
\left(\frac{\theta_1}{q }- \frac{\sigma_1}{p}\right)& > -\frac{(n-1)}{n}\Theta_{p,q}(s-t),\\\label{sum:sigmas:thetas:alpha:3}
\left(\frac{\theta_3}{q }- \frac{\sigma_3}{p}\right) &> -\frac{1}{n}\Theta_{p,q}(s-t);
\end{align}
\end{enumerate}
then, 
\begin{equation}\label{W:Sob:thetas:sigmas}
\norm{D^t f}{L^q(u_{\theta_1, \theta_2, \theta_3})} \lesssim \norm{D^s f}{L^p(u_{\sigma_1, \sigma_2, \sigma_3})}, \quad \forall f \in D^{>\tau} \mathcal{S}(\rn),
\end{equation}
with $\tau:= \max\{-n, -t\}$, $
D^{> \tau}:=\{f \in L^\infty: f = D^bg\text{ for some } g \in \mathcal{S}(\rn) \text{ and } b > \tau\},
$
and the implicit constant depends only on $p$, $q$, $s,$ $t,$ $\theta_1, \theta_2,$ $\theta_3, \sigma_1, \sigma_2, \sigma_3$, and $n$.  
\end{thm}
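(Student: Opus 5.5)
Set $\alpha := s-t\in(0,n)$ and write $D^t f = I_\alpha (D^s f)$, where $I_\alpha$ is the Riesz potential, $\widehat{I_\alpha g}=|\xi|^{-\alpha}\hat g$. This identity is legitimate on $D^{>\tau}\mathcal{S}(\rn)$. If $f=D^b g$ with $g\in\mathcal{S}$ and $b>\tau=\max\{-n,-t\}$, then $\widehat{D^s f}=|\xi|^{s+b}\hat g$ with $s+b>s-t>0$. Hence $D^s f$ is an $L^1\cap L^\infty$ function with enough decay that the Riesz potential converges absolutely. Moreover $\widehat{D^tf}=|\xi|^{t+b}\hat g$ with $t+b>0$, so the Fourier identity passes to pointwise equality. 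Taking $h:=D^sf$, inequality \eqref{W:Sob:thetas:sigmas} reduces to the two-weight Stein--Weiss type estimate
\[
\Big\|I_\alpha h\Big\|_{L^q(u_{\theta_1,\theta_2,\theta_3})}\lesssim \|h\|_{L^p(u_{\sigma_1,\sigma_2,\sigma_3})}.
\]
Equivalently, with $v:=u_{\theta}^{1/q}$ and $w:=u_\sigma^{1/p}$, I need $\|v\, I_\alpha(w^{-1}\phi)\|_{L^q}\lesssim\|\phi\|_{L^p}$.

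\textbf{Critical case (i).} Here $q=np/(n-\alpha p)$ and $v=w=:W$, since $\theta_j/q=\sigma_j/p$. The natural tool is the Muckenhoupt--Wheeden theorem: $I_\alpha$ is bounded from $L^p(W^p)$ to $L^q(W^q)$ if and only if $W\in A_{p,q}$, i.e. $W^q\in A_{1+q/p'}$. So I would verify
\[
\sup_Q\Big(\fint_Q u_\theta\Big)^{1/q}\Big(\fint_Q u_\sigma^{-p'/p}\Big)^{1/p'}<\infty,
\]
using that both $u_\theta$ and $u_\sigma^{-p'/p}$ are locally integrable. This local integrability is exactly hypothesis \eqref{sigma:theta:Ainf} combined with Lemma \ref{lemma:L1loc:indices}. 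The relation $\sigma=p\theta/q$ gives $u_\sigma^{-p'/p}=u_\theta^{-p'/q}$, so the condition becomes $u_\theta\in A_{1+q/p'}$. By the Miao--Zhao characterization, \cite[Theorem 3.2]{MZarXiv1}, this amounts to $\theta\in(\mathcal A\cup\mathcal B)\cap(\mathcal C_{\tilde p}\cup\mathcal D_{\tilde p})$ with $\tilde p=1+q/p'$. The $\mathcal C\cup\mathcal D$ part should follow from $-\tfrac{p'}{p}\sigma=-\tfrac{p'}{q}\theta\in\mathcal A\cup\mathcal B$, since the dual weight $u_\theta^{-1/(\tilde p-1)}$ is precisely $u_\theta^{-p'/q}$. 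The duality between the $\mathcal A\cup\mathcal B$ and $\mathcal C_{\tilde p}\cup\mathcal D_{\tilde p}$ conditions is a routine check using the simplified descriptions of these sets given in the introduction.

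\textbf{Subcritical case (ii).} Here $v\neq w$, and I would use a two-weight criterion for fractional integrals with power-type weights. The plan is to decompose the weights by homogeneity. Write $v=|x'|^{a_1}|x|^{a_2}|x_n|^{a_3}\cdot w\cdot(\text{extra})$, with the extra factor $u_{\delta}$, $\delta_j:=\theta_j/q-\sigma_j/p$, so that $\delta_1+\delta_2+\delta_3=-\Theta$ by \eqref{sum:sigmas:thetas:alpha}. Then I would split $\Theta=\Theta_1+\Theta_3+\Theta_2$ so as to absorb $u_\delta$ pointwise into a smaller-order fractional integral.

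Concretely, set $\alpha'=\alpha-\Theta$, which is critical for $(p,q)$. The key pointwise estimate to aim for is
\[
u_\delta(x)\,|x-y|^{\alpha-n}\lesssim |x-y|^{\alpha'-n}\cdot(\text{factors in }y\text{ and }x\text{ homogeneous of degree }0).
\]
This is a Stein--Weiss type splitting, treated via the regions $|y|<|x|/2$, $|y|>2|x|$, and $|x-y|\sim$ comparable. In the two far regions one uses Hardy-type operators along the $x'$ and $x_n$ directions. The strict inequalities \eqref{sum:sigmas:thetas:alpha:2} and \eqref{sum:sigmas:thetas:alpha:3} are what make the anisotropic Hardy operators in $x'$ and $x_n$ bounded, each carrying its share $\frac{n-1}{n}\Theta$ and $\frac1n\Theta$ of the homogeneity deficit. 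Alternatively, the case could be reduced to (i) by the Sawyer-type testing conditions of \cite{DD23}, but I would first try the direct region splitting.

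\textbf{Main obstacle.} The hard part is the near region in case (ii), where $x$ and $y$ are close to the singular sets $\{x'=0\}$ or $\{x_n=0\}$. There one must show that a Hedberg-type or $A_{p,q}$ argument still gives boundedness with the extra factor $u_\delta$. I expect this to need a further dyadic decomposition adapted to $|x'|$ and $|x_n|$ separately, and this is where the exact thresholds $(n-1)/n$ and $1/n$ must come out.
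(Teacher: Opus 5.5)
Your reduction $D^tf=I_{s-t}(D^sf)$ on $D^{>\tau}\mathcal{S}(\rn)$ is sound, and so is your treatment of case~(i). The relation $u_{\sigma}^{-p'/p}=u_\theta^{-p'/q}$ turns the Muckenhoupt--Wheeden condition into $u_\theta\in A_{1+q/p'}(\rn)$. Your duality check against the simplified description of $\mathcal{C}_{\tilde p}\cup\mathcal{D}_{\tilde p}$ is correct. This is a hands-on version of what the paper does with Theorem~\ref{thm:SW:Ainf:p:q:alpha0=0} (with $C_0=1$) and Theorem~\ref{coro:2.10:DD23}.

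Case~(ii), however, is not proved. The factorization you aim for is trivial: $u_\delta(x)|x-y|^{\alpha-n}=|x-y|^{\alpha'-n}\cdot u_\delta(x)|x-y|^{\Theta}$, with $\Theta:=\Theta_{p,q}(s-t)$. The whole difficulty is that the degree-zero factor $u_\delta(x)|x-y|^{\Theta}$ is unbounded. It blows up near $\{x'=0\}$ or $\{x_n=0\}$ when $\delta_1<0$ or $\delta_3<0$, and both are allowed by \eqref{sum:sigmas:thetas:alpha:2}--\eqref{sum:sigmas:thetas:alpha:3}. In the region $|x|\sim|y|$, unlike the radial Stein--Weiss case, you do not have $|x'|\sim|y'|$ or $|x_n|\sim|y_n|$. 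You flag this as the main obstacle and leave it open. Your reading of \eqref{sum:sigmas:thetas:alpha:2}--\eqref{sum:sigmas:thetas:alpha:3} as Hardy-operator thresholds is also never substantiated.

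The missing idea is that no geometric decomposition is needed; a two-weight theorem under $A_\infty$ hypotheses does the work. Use the paper's convention $w:=u_{\theta_1,\theta_2,\theta_3}$, $v:=u_{\sigma_1,\sigma_2,\sigma_3}$, not yours.
\begin{enumerate}[(1)]
\item Hypothesis \eqref{sigma:theta:Ainf} gives $w,v^{-p'/p}\in A_\infty(\rn)$. For these power weights, local integrability is equivalent to $A_\infty$, by Theorem~\ref{thm:Ainf:indices} with a large exponent.
\item For such pairs, the Sobolev bound follows from the single condition $(v,w)\in A^{s-t}_{p,q}$, that is, $\sup_Q|Q|^{\Theta/n}(\fint_Q w)^{1/q}(\fint_Q v^{-p'/p})^{1/p'}<\infty$. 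This is Theorem~\ref{coro:2.10:DD23}, where Lemma~\ref{lemma:GammaRS} removes the extra hypothesis of the original Duarte--Drumond Silva result.
\item That condition follows from $w^{1/q}/v^{1/p}=u_\delta\in L^{n/\Theta,\infty}(\rn)$, by Theorem~\ref{thm:SW:Ainf:p:q}.
\item Lemma~\ref{lemma:L1:w} shows this weak-type membership holds exactly when $\delta_1+\delta_2+\delta_3=-\Theta$, $\delta_1>-\frac{n-1}{n}\Theta$, and $\delta_3>-\frac1n\Theta$. These are precisely \eqref{sum:sigmas:thetas:alpha}--\eqref{sum:sigmas:thetas:alpha:3}.
\end{enumerate}
So the thresholds $\frac{n-1}{n}$ and $\frac1n$ are the weak-$L^{n/\Theta}$ integrability thresholds of $u_\delta$ along $\{x'=0\}$ and $\{x_n=0\}$. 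Without this chain, or some other genuine two-weight theorem for $I_{s-t}$ under $A_\infty$ hypotheses, your case~(ii) remains a sketch.
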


\begin{remark}\label{rmk:weights:inside} As pointed out in \cite[Remark 3]{DD23}, if $t>0$ then $\tau <0$ and $D^{> \tau}$ contains the Schwartz class $\mathcal{S}(\rn)$. In particular, the weighted Sobolev inequality \eqref{W:Sob:thetas:sigmas} extends \eqref{Sob:MZ} to the case of arbitrary fractional derivatives $D^t$ and $D^s$ with $0< t <s < t +n$. Finally, notice that \eqref{W:Sob:thetas:sigmas} can be written as
\begin{equation}\label{W:Sob:thetas:sigmas:2}
\norm{u_{\theta_1/q, \theta_2/q, \theta_3/q} D^t f}{L^q} \lesssim \norm{u_{\sigma_1/p, \sigma_2/p, \sigma_3/p} D^s f}{L^p}, \quad \forall f \in D^{>\tau} \mathcal{S}(\rn).
\end{equation}
\end{remark}

Secondly, for an arbitrary $\theta \in (t/s, 1)$ we have

\begin{thm}\label{thm:main}  Fix $1 < p, q, r < \infty$, $0 < t < s$, and $t/s < \theta < 1$,  and let $a \in (1, \infty)$ be defined by 
\begin{equation}\label{rel:apqtheta}
\frac{1}{a}:= \frac{\theta}{p} + \frac{1 -\theta}{q}.
\end{equation}
Suppose that $a \leq r$, that $\theta s < t +n$, and let $\alpha_0, \alpha_1, \beta_0,  \beta_1, \gamma_0, \gamma_1 \in \re$ satisfy 
\begin{align*}
p(\alpha_1, \beta_1, \gamma_1)  & \in (\mathcal{A}\cup\mathcal{B})\cap(\mathcal{C}_{p}\cup\mathcal{D}_{p}),\\
q(\alpha_0, \beta_0, \gamma_0)   & \in (\mathcal{A}\cup\mathcal{B})\cap(\mathcal{C}_{q}\cup\mathcal{D}_{q}),\\
(\alpha_j, \beta_k, \gamma_\ell) & \in \mathcal{A}\cup\mathcal{B}, \quad \text{ for } j,k,\ell =0,1. 
\end{align*}
As before, 
$
(\alpha_\theta, \beta_\theta, \gamma_\theta) :=  (\alpha_1, \beta_1, \gamma_1) \theta +  (\alpha_0, \beta_0, \gamma_0)(1-\theta).
$
\begin{enumerate}[(i)] 
\item If $\Theta_{a,r}(\theta s - t):=\theta s - t + n \left( \frac{1}{r} - \frac{1}{a}\right) =0$ and
\begin{equation}\label{cond:delta:gamma:1:2:B:1}
- a'(\alpha_\theta, \beta_\theta, \gamma_\theta), r (\alpha_\theta, \beta_\theta, \gamma_\theta)  \in \mathcal{A}\cup\mathcal{B}, \end{equation}
then
\begin{equation}\label{GN:E:0}
\norm{|x'|^{\alpha_\theta}|x|^{\beta_\theta}|x_n|^{\gamma_\theta} D^t f}{L^r} \lesssim \norm{|x'|^{\alpha_1}|x|^{\beta_1}|x_n|^{\gamma_1} D^s f}{L^p}^{\theta} \norm{|x'|^{\alpha_0}|x|^{\beta_0}|x_n|^{\gamma_0} f}{L^q}^{1-\theta}, 
\end{equation}
for every $f \in \mathcal{S}(\rn)$.
\item If $\Theta_{a,r}(\theta s - t):= \theta s - t + n \left( \frac{1}{r} - \frac{1}{a}\right) >0$ and if $\alpha_\theta', \beta_\theta', \gamma_\theta' \in \re$ satisfy
\begin{align}\label{sum:sigmas:thetas:alpha:a:r}
(\alpha_\theta' - \alpha_\theta) + (\beta_\theta' - \beta_\theta) + (\gamma_\theta' - \gamma_\theta) & = -\Theta_{a,r}(\theta s - t),\\\label{sum:sigmas:thetas:alpha:a:r:2}
\alpha_\theta'  -\alpha_\theta &> - \frac{(n-1)}{n}\Theta_{a,r}(\theta s - t),\\\label{sum:sigmas:thetas:alpha:a:r:3}
\gamma_\theta'  - \gamma_\theta &> - \frac{1}{n}\Theta_{a,r}(\theta s - t);
\end{align}
\end{enumerate}
then
\begin{equation}\label{GN:E:1}
\norm{|x'|^{\alpha_\theta'}|x|^{\beta_\theta'}|x_n|^{\gamma_\theta'} D^t f}{L^r} \lesssim \norm{|x'|^{\alpha_1}|x|^{\beta_1}|x_n|^{\gamma_1} D^s f}{L^p}^{\theta} \norm{|x'|^{\alpha_0}|x|^{\beta_0}|x_n|^{\gamma_0} f}{L^q}^{1-\theta}, 
\end{equation}
for every $f \in \mathcal{S}(\rn)$.
\end{thm}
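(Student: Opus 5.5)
The plan is to factor the inequality through the intermediate derivative $D^{\theta s}$. In this way Theorem~\ref{thm:main} reduces to Theorem~\ref{thm:t/s}, applied with $t$ replaced by $\theta s$, followed by Theorem~\ref{thm:theta=1}, applied with the pair $(\theta s, t)$ in place of $(s,t)$ and the pair $(a,r)$ in place of $(p,q)$.

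\textbf{Step 1: interpolation at the level $\theta s$.} Since $0<\theta s<s$ and $\theta = (\theta s)/s$, and since \eqref{rel:apqtheta} is exactly \eqref{rel:pqrst} with $r$ replaced by $a$, the hypotheses on $(\alpha_j,\beta_j,\gamma_j)$ are precisely \eqref{p:1:p}--\eqref{aj:bk:gl:L1loc}. Theorem~\ref{thm:t/s} therefore yields
\begin{equation*}
\norm{u_{\alpha_\theta,\beta_\theta,\gamma_\theta} D^{\theta s} f}{L^a} \lesssim \norm{u_{\alpha_1,\beta_1,\gamma_1} D^s f}{L^p}^{\theta} \norm{u_{\alpha_0,\beta_0,\gamma_0} f}{L^q}^{1-\theta}
\end{equation*}
for every $f\in\mathcal{S}(\rn)$.

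\textbf{Step 2: weighted Sobolev from $D^{\theta s}$ to $D^t$.} It then suffices to prove
\begin{equation*}
\norm{u_{\alpha',\beta',\gamma'} D^t f}{L^r}\lesssim \norm{u_{\alpha_\theta,\beta_\theta,\gamma_\theta} D^{\theta s} f}{L^a},
\end{equation*}
where $(\alpha',\beta',\gamma')=(\alpha_\theta,\beta_\theta,\gamma_\theta)$ in case (i) and $(\alpha'_\theta,\beta'_\theta,\gamma'_\theta)$ in case (ii). I will use the form \eqref{W:Sob:thetas:sigmas:2} of Theorem~\ref{thm:theta=1}, with exponents $1<a\le r<\infty$ and smoothness $t<\theta s<t+n$; here $t<\theta s$ because $\theta>t/s$. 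The weight exponents are chosen as
\begin{equation*}
(\theta_1,\theta_2,\theta_3)=r(\alpha',\beta',\gamma'), \qquad (\sigma_1,\sigma_2,\sigma_3)=a(\alpha_\theta,\beta_\theta,\gamma_\theta).
\end{equation*}
Condition \eqref{sigma:theta:Ainf} then asks that $-a'(\alpha_\theta,\beta_\theta,\gamma_\theta)$ and $r(\alpha',\beta',\gamma')$ lie in $\mathcal{A}\cup\mathcal{B}$.

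\emph{Case (i).} This membership is exactly \eqref{cond:delta:gamma:1:2:B:1}, and \eqref{theta:j:q:sigma:j:p} holds trivially since $\theta_j/r=\sigma_j/a$.

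\emph{Case (ii).} Conditions \eqref{sum:sigmas:thetas:alpha}--\eqref{sum:sigmas:thetas:alpha:3} translate verbatim into \eqref{sum:sigmas:thetas:alpha:a:r}--\eqref{sum:sigmas:thetas:alpha:a:r:3}. The $\mathcal{A}\cup\mathcal{B}$ memberships are not listed explicitly, so they must be checked.
\begin{itemize}
\item For $-a'(\alpha_\theta,\beta_\theta,\gamma_\theta)$: I expect this to follow from the $A_p$ and $A_q$ hypotheses. The dual weights $u^{-p'/p}$ and $u^{-q'/q}$ are locally integrable, and convexity of $\mathcal{A}\cup\mathcal{B}$ (it is defined by linear inequalities), combined with the relation \eqref{rel:apqtheta}, should give local integrability of $u_{\alpha_\theta,\beta_\theta,\gamma_\theta}^{-a'}$. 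Concretely, by H\"older the weight $u_\theta^{a}$ lies in $A_a$ whenever $u_1^p\in A_p$ and $u_0^q\in A_q$, the standard interpolation of Muckenhoupt weights.
\item For $r(\alpha'_\theta,\dots)$: this uses the strict inequalities \eqref{sum:sigmas:thetas:alpha:a:r:2}--\eqref{sum:sigmas:thetas:alpha:a:r:3} together with the sum condition and $u_\theta^{a}\in L^1_{\rm loc}$ (from the $A_a$ membership), via the description of $\mathcal{A}\cup\mathcal{B}$ after the definition of the classes. If this check fails in some range, I would fall back on the proof structure of Theorem~\ref{thm:theta=1} itself.
\end{itemize}

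\textbf{Step 3: admissible class and combination.} Since $t>0$ gives $\tau<0$, Remark~\ref{rmk:weights:inside} places $\mathcal{S}(\rn)\subset D^{>\tau}\mathcal{S}(\rn)$, so Step~2 applies to every $f\in\mathcal{S}(\rn)$. Chaining Steps~1 and~2 yields \eqref{GN:E:0} and \eqref{GN:E:1}.

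The main obstacle is Step~2. The remaining work is verifying the hypotheses \eqref{sigma:theta:Ainf} of Theorem~\ref{thm:theta=1} in case (ii), and more importantly establishing the $A_a$-type interpolation of the anisotropic weights; this is where the hypotheses with mixed indices $(\alpha_j,\beta_k,\gamma_\ell)\in\mathcal{A}\cup\mathcal{B}$ are presumably needed.
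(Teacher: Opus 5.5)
\textbf{Gap: the second membership in case (ii) does not follow from the hypotheses, and the statement itself fails without it.}

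Your route is the paper's: apply Theorem~\ref{thm:t/s} at smoothness $\theta s$ in $L^a$, then Theorem~\ref{thm:theta=1} with $(a,r)$ in place of $(p,q)$ and $(\theta s,t)$ in place of $(s,t)$, taking $\sigma=a(\alpha_\theta,\beta_\theta,\gamma_\theta)$ and $(\theta_1,\theta_2,\theta_3)=r(\alpha',\beta',\gamma')$.

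Your first bullet in case (ii) is correct. Since $1/a'=\theta/p'+(1-\theta)/q'$, the vector $-a'(\alpha_\theta,\beta_\theta,\gamma_\theta)$ is the convex combination, with weights $\theta a'/p'$ and $(1-\theta)a'/q'$, of $-p'(\alpha_1,\beta_1,\gamma_1)$ and $-q'(\alpha_0,\beta_0,\gamma_0)$. Both lie in $\mathcal{A}\cup\mathcal{B}$ by the $A_p$ and $A_q$ hypotheses, and $\mathcal{A}\cup\mathcal{B}$ is convex. This also shows that the first condition in \eqref{cond:delta:gamma:1:2:B:1} is redundant.

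Your second bullet is false. From $a(\alpha_\theta+\beta_\theta+\gamma_\theta)>-n$ and \eqref{sum:sigmas:thetas:alpha:a:r} you only get
\[ r(\alpha'_\theta+\beta'_\theta+\gamma'_\theta)>-\tfrac{rn}{a}-r\Theta_{a,r}(\theta s-t)=-n-r(\theta s-t), \]
which is strictly weaker than $>-n$ because $\theta s>t$. The $\alpha'$ and $\gamma'$ conditions lose $\tfrac{n-1}{n}r(\theta s-t)$ and $\tfrac1n r(\theta s-t)$ respectively, in the same way. The $A_a$ membership of $u_\theta^a$ controls only $a(\alpha_\theta,\beta_\theta,\gamma_\theta)$, and the shift by $\Theta_{a,r}(\theta s-t)$, which contains the positive term $\theta s-t$, can push $r(\alpha'_\theta,\beta'_\theta,\gamma'_\theta)$ out of $\mathcal{A}\cup\mathcal{B}$.

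No fallback through the proof of Theorem~\ref{thm:theta=1} can rescue this, because part (ii) is false as stated. Take the following data.
\begin{itemize}
\item $n=3$, $p=q=r=2$ (so $a=2$), $s=4$, $t=1$, $\theta=3/4$, and all $\alpha_j,\beta_j,\gamma_j=0$.
\item Then $\theta s=3<t+n$ and $\Theta_{a,r}(\theta s-t)=2$.
\item $(\alpha'_\theta,\beta'_\theta,\gamma'_\theta)=(0,-2,0)$ satisfies \eqref{sum:sigmas:thetas:alpha:a:r}, \eqref{sum:sigmas:thetas:alpha:a:r:2} and \eqref{sum:sigmas:thetas:alpha:a:r:3}.
\end{itemize}
Here \eqref{GN:E:1} would read $\norm{|x|^{-2}D^1f}{L^2(\re^3)}\lesssim\norm{D^4f}{L^2}^{3/4}\norm{f}{L^2}^{1/4}$. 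For a Gaussian $f$ the right side is finite. The left side is infinite, since $D^1f$ is continuous with $D^1f(0)=c\int|\xi|\hat f(\xi)\,d\xi>0$ and $|x|^{-4}\notin L^1_{\rm loc}(\re^3)$.

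Part (ii) therefore needs the extra hypothesis $r(\alpha'_\theta,\beta'_\theta,\gamma'_\theta)\in\mathcal{A}\cup\mathcal{B}$, the analogue of the second condition in \eqref{cond:delta:gamma:1:2:B:1}. The paper's proof has the same gap: in case (ii) it invokes part (ii) of Theorem~\ref{thm:theta=1} without verifying \eqref{sigma:theta:Ainf}. With that hypothesis added, your Steps 1--3 together with your first bullet give a complete proof.

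One small correction: the mixed-index hypotheses $(\alpha_j,\beta_k,\gamma_\ell)\in\mathcal{A}\cup\mathcal{B}$ are not what drives the $A_a$ interpolation. They are used only inside Theorem~\ref{thm:t/s}, to produce the $L^1_{\rm loc}$ majorant $h$ of the analytic family $w_z$.
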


In the case $a=r$ from Theorem \ref{thm:main}, the fact that $\Theta_{a,r}(\theta s - t) = \theta s - t >0$ immediately yields the following corollary. 

\begin{coro}\label{coro:a=r} Fix $1 < p, q < \infty$, $0 < t < s$, and $t/s < \theta < 1$ with $\theta s < t +n$. Let $r \in (1, \infty)$ be defined by 
\begin{equation}\label{rel:apqtheta:r}
\frac{1}{r}:= \frac{\theta}{p} + \frac{1 -\theta}{q}.
\end{equation}
Suppose that $\alpha_0, \alpha_1, \beta_0,  \beta_1, \gamma_0, \gamma_1 \in \re$ satisfy 
\begin{align*}
p(\alpha_1, \beta_1, \gamma_1)  & \in (\mathcal{A}\cup\mathcal{B})\cap(\mathcal{C}_{p}\cup\mathcal{D}_{p}),\\
q(\alpha_0, \beta_0, \gamma_0)   & \in (\mathcal{A}\cup\mathcal{B})\cap(\mathcal{C}_{q}\cup\mathcal{D}_{q}),\\
(\alpha_j, \beta_k, \gamma_\ell) & \in \mathcal{A}\cup\mathcal{B}, \quad \text{ for } j,k,\ell =0,1. 
\end{align*}
Fix $\alpha_\theta', \beta_\theta', \gamma_\theta' \in \re$ such that 
\begin{align*}
 (\alpha_\theta' - \alpha_\theta) + (\beta_\theta' - \beta_\theta) + (\gamma_\theta' - \gamma_\theta) & =- (\theta s - t),\\
\alpha_\theta'  -\alpha_\theta &> - \frac{(n-1)}{n}(\theta s - t),\\
\gamma_\theta'  - \gamma_\theta &> - \frac{1}{n}(\theta s - t).
\end{align*}
Then
$$
\norm{|x'|^{\alpha_\theta'}|x|^{\beta_\theta'}|x_n|^{\gamma_\theta'} D^t f}{L^r} \lesssim \norm{|x'|^{\alpha_1}|x|^{\beta_1}|x_n|^{\gamma_1} D^s f}{L^p}^{\theta} \norm{|x'|^{\alpha_0}|x|^{\beta_0}|x_n|^{\gamma_0} f}{L^q}^{1-\theta}, 
$$
for every $f \in \mathcal{S}(\rn)$.
\end{coro}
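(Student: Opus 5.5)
The corollary is the special case $a=r$ of Theorem \ref{thm:main}(ii), so the plan is to check that its hypotheses are met and then invoke that result.

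First, \eqref{rel:apqtheta:r} is exactly \eqref{rel:apqtheta} with $a$ replaced by $r$. Hence if we set $a:=r$, the number $a$ defined in Theorem \ref{thm:main} equals $r$. In particular the requirement $a\le r$ holds with equality, and $r\in(1,\infty)$ because $1<p,q<\infty$. The remaining standing assumptions of Theorem \ref{thm:main} are assumed verbatim in the corollary: $0<t<s$, $t/s<\theta<1$, $\theta s<t+n$, the Muckenhoupt conditions on $p(\alpha_1,\beta_1,\gamma_1)$ and $q(\alpha_0,\beta_0,\gamma_0)$, and the local integrability conditions on $(\alpha_j,\beta_k,\gamma_\ell)$.

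Next we compute the criticality index. With $a=r$,
\[
\Theta_{a,r}(\theta s-t)=\theta s-t+n\left(\frac1r-\frac1r\right)=\theta s-t,
\]
and $\theta s-t>0$ because $\theta>t/s$. So we are in the subcritical case (ii) of Theorem \ref{thm:main}, not the critical case (i). Substituting $\Theta_{a,r}(\theta s-t)=\theta s-t$ into \eqref{sum:sigmas:thetas:alpha:a:r}, \eqref{sum:sigmas:thetas:alpha:a:r:2} and \eqref{sum:sigmas:thetas:alpha:a:r:3} gives exactly the three conditions imposed on $\alpha_\theta',\beta_\theta',\gamma_\theta'$ in the corollary.

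Therefore \eqref{GN:E:1} holds with $r=a$, and that is the asserted inequality. There is no real obstacle here. The one point needing care is that case (ii) of Theorem \ref{thm:main} imposes no extra condition such as \eqref{cond:delta:gamma:1:2:B:1}; that condition belongs only to case (i). So no further verification is needed.
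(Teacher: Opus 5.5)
Your reduction is exactly the paper's: the corollary is derived there in one line by taking $a=r$ in Theorem \ref{thm:main} and noting $\Theta_{a,r}(\theta s-t)=\theta s-t>0$. The trouble is the step you flag and then dismiss (``case (ii) imposes no extra condition, so no further verification is needed''). That is where the argument breaks, in your proposal and in the paper alike. Theorem \ref{thm:main}(ii) is proved by combining \eqref{GN:theta:t/s:2} with part (ii) of Theorem \ref{thm:theta=1}, taking $p\to a$, $q\to r$, $s\to\theta s$, $(\sigma_1,\sigma_2,\sigma_3)=a(\alpha_\theta,\beta_\theta,\gamma_\theta)$ and $(\theta_1,\theta_2,\theta_3)=r(\alpha'_\theta,\beta'_\theta,\gamma'_\theta)$. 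That theorem requires \eqref{sigma:theta:Ainf}, i.e.\ $w,v^{-p'/p}\in\Ainf$, which Theorems \ref{thm:SW:Ainf:p:q} and \ref{coro:2.10:DD23} genuinely use. The half $-a'(\alpha_\theta,\beta_\theta,\gamma_\theta)\in\mathcal{A}\cup\mathcal{B}$ is automatic. It is the convex combination, with weights $\theta a'/p'$ and $(1-\theta)a'/q'$, of $-p'(\alpha_1,\beta_1,\gamma_1)$ and $-q'(\alpha_0,\beta_0,\gamma_0)$. Both of these lie in the convex set $\mathcal{A}\cup\mathcal{B}$ by the $A_p$ and $A_q$ hypotheses. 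The other half, $r(\alpha'_\theta,\beta'_\theta,\gamma'_\theta)\in\mathcal{A}\cup\mathcal{B}$, is not implied. The sum condition gives $r(\alpha'_\theta+\beta'_\theta+\gamma'_\theta)=r(\alpha_\theta+\beta_\theta+\gamma_\theta)-r(\theta s-t)$, and nothing prevents this from being $\le -n$.

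In fact the corollary as stated is false without this hypothesis. Take $n=2$, $p=q=r=2$, $t=1$, $s=5$ and $\theta=1/2$, so that $t/s<\theta<1$ and $\theta s=5/2<t+n$. Take all $\alpha_j,\beta_j,\gamma_j=0$, which is trivially admissible, and set $\alpha'_\theta=\gamma'_\theta=0$, $\beta'_\theta=-3/2$; all three conditions on the primed exponents hold. The conclusion would then read $\norm{|x|^{-3/2}D^1 f}{L^2(\re^2)}\lesssim\norm{D^5 f}{L^2}^{1/2}\norm{f}{L^2}^{1/2}$. But if $\hat f$ is a Gaussian, $D^1 f$ is continuous with $D^1 f(0)=c\int|\xi|\hat f(\xi)\,d\xi\neq0$. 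The left side is then bounded below by a multiple of $\bigl(\int_{|x|<\delta}|x|^{-3}\,dx\bigr)^{1/2}=\infty$, while the right side is finite. So the corollary, and likewise Theorem \ref{thm:main}(ii), needs the additional hypothesis $r(\alpha'_\theta,\beta'_\theta,\gamma'_\theta)\in\mathcal{A}\cup\mathcal{B}$. Once that is added, your reduction goes through verbatim.
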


The rest of the article is organized as follows: Section \ref{secc:prelim} describes the relevant weight classes, sufficient criteria for membership in those classes, and basic facts on the anisotropic weight $u_{\theta_1, \theta_2, \theta_3}$. Section \ref{sec:approach} lays out an abstract, unifying approach towards weighted inequalities; whereas Sections \ref{sec:proof:thm:t/s}, \ref{secc:theta:1}, and  \ref{sec:inter:theta} contain the proofs of Theorems \ref{thm:t/s}, \ref{thm:theta=1}, and \ref{thm:main}, respectively. 
 
\section{Preliminaries}\label{secc:prelim}

\subsection{Weight classes}\label{sec:Ap:weights}  For $1 < p < \infty$ a weight $w$ in $\rn$ (that is, $w \in L^1_{\rm{loc}}(\rn)$ with $w \geq 0$ a.e. in $\rn$) is said to belong to the \emph{Muckenhoupt class} $A_p(\rn)$ if 
\begin{equation}\tag*{$(A_p)$}\label{Ap}
[w]_{A_p}:= \sup\limits_{Q} \left(\fint_Q w \right)  \left(\fint_Q w^{\frac{-1}{(p-1)}} \right)^{p-1} < \infty
\end{equation}
where $Q \subset \rn$ is a cube and $\fint_Q u$ denotes the average $\frac{1}{|Q|} \int_Q u$. For $p=1$ and $p=\infty$,
\begin{equation}\tag*{$(A_1)$}\label{def:A1}
[w]_{A_1}:= \sup\limits_{Q} \left(\fint_Q w \right)  \left(\essinf_Q w \right)^{-1} < \infty
\end{equation}
and
\begin{equation}\tag*{$(A_\infty)$}\label{def:Ainf}
[w]_{A_\infty}:= \sup\limits_{Q} \left(\fint_Q w \right)  \exp \left(-\fint_Q \ln w \right) < \infty.
\end{equation}
It follows that $A_\infty(\rn) = \bigcup_{p\geq1}A_p(\rn)$ and, for $1 < p < \infty$, 
\begin{equation}\label{w:Ap:Ainf}
w \in A_p(\rn) \Leftrightarrow w, w^{-p'/p} \in \Ainf,
\end{equation}
(see for instance \cite[Section~9.3 and Exercise 9.3.3]{GrafakosBook}). As typical examples of $A_p$-weights, for $1 < p < \infty$, the weight $|x|^a \in A_p(\rn)$ if and only if $-n < a < n (p-1)$, whereas $|x|^a \in A_1(\rn)$ if and only if $-n < a \leq 0$ (see for instance  \cite[Section 9.1.2]{GrafakosBook}). 

For $1 < p, q < \infty$ and $\alpha \in \re$, two (measurable) nonnegative functions $v$ and $w$ are said to satisfy the $A_{p,q}^\alpha$ condition (see \cite[Definition 2.3]{DD23}), in symbols $(v,w) \in A_{p,q}^\alpha$, if
\begin{equation*}
[(v,w)]_{A_{p,q}^\alpha}:= \sup\limits_Q |Q|^{\alpha/n -1} \left( \int_Q w\right)^{1/q} \left(\int_Q v^{-p'/p} \right)^{1/p'}  < \infty.
\end{equation*}
From the definition of $\Theta_{p,q}(\alpha)$ in \eqref{def:alpha0}, the condition $(v,w) \in A_{p,q}^\alpha$ can be written as 
\begin{equation}\label{def:Apq:Theta}
[(v,w)]_{A_{p,q}^\alpha}:= \sup\limits_Q |Q|^{\Theta_{p,q}(\alpha)/n} \left( \fint_Q w\right)^{1/q} \left(\fint_Q v^{-p'/p} \right)^{1/p'}  < \infty.
\end{equation}
In the case $\Theta_{p,q}(\alpha)=0$, let us define $(v,w) \in A_{p,q}$ (as in \cite[Definition 1.5]{DD23}) by 
\begin{equation}\label{def:Apq}
[(v,w)]_{A_{p,q}}:= \sup\limits_Q \left( \fint_Q w\right)^{1/q} \left(\fint_Q v^{-p'/p} \right)^{1/p'}  < \infty.
\end{equation}

\begin{remark}\label{rmk:Theta:>=0} If $\Theta_{p,q}(\alpha) < 0$, then the condition \eqref{def:Apq:Theta} along with the Lebesgue Differentiation Theorem yields $w \equiv 0$. Thus, it is only meaningful to consider $\Theta_{p,q}(\alpha) \geq 0$. 
\end{remark}

The following two results from \cite{MaSo} provide sufficient conditions on a pair $(v, w)$ with $w, v^{-p'/p} \in \Ainf$ so that $(v, w) \in A_{p,q}^\alpha$ within the critical and subcritical cases for the indices. For the critical case $\Theta_{p,q}(\alpha) = 0$, that is, $1/q = 1/p - \alpha/n$, we have

\begin{thm}[Theorem 1.1 from \cite{MaSo}]\label{thm:SW:Ainf:p:q:alpha0=0} Fix $1 < p \leq q < \infty$ and $0 < \alpha < n$ such that $\Theta_{p,q}(\alpha) = 0$. Given $(v, w)$ with $w, v^{-p'/p} \in \Ainf$ suppose that there exists $C_0 > 0$ with
\begin{equation}\label{w:1/q:C:v:1/p}
w(x)^{1/q} \leq C_0 \, v(x)^{1/p}, \quad \text{a.e. } x \in \rn.
\end{equation}
Then \eqref{def:Apq:Theta} holds true with the estimate $[(v, w)]_{A_{p,q}^\alpha} \leq  C_0 C_1$, where $C_1 > 0$ depends only on $n$, $p$, $q$, $\alpha$, $[w]_{\Ainf}$, and $[v^{-p'/p}]_{\Ainf}$.
\end{thm}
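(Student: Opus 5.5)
Since $\Theta_{p,q}(\alpha)=0$, the quantity to control is, by \eqref{def:Apq:Theta},
\[
\Big(\fint_Q w\Big)^{1/q}\Big(\fint_Q v^{-p'/p}\Big)^{1/p'}
\]
uniformly over cubes $Q$. We pass the pointwise bound \eqref{w:1/q:C:v:1/p} through these averages using reverse Jensen (or reverse H\"older) inequalities, which are available because both $w$ and $\sigma:=v^{-p'/p}$ lie in $\Ainf$.

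First, the pointwise hypothesis converts into a bound on $\sigma$. From $w^{1/q}\le C_0 v^{1/p}$ a.e. we get
\[
v^{-1/p}\le C_0\, w^{-1/q}, \qquad\text{hence}\qquad \sigma=v^{-p'/p}=(v^{-1/p})^{p'}\le C_0^{p'} w^{-p'/q} \quad \text{a.e.}
\]
In logarithmic form this reads $\frac{1}{q}\ln w+\frac{1}{p'}\ln\sigma\le \ln C_0$ a.e. on the set where both are finite. Both $w$ and $\sigma$ are positive a.e., since $\Ainf$ weights vanish only on null sets.

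Next, fix a cube $Q$ and use the defining property \ref{def:Ainf} for each weight:
\[
\fint_Q w\le [w]_{\Ainf}\exp\Big(\fint_Q\ln w\Big), \qquad \fint_Q \sigma\le [\sigma]_{\Ainf}\exp\Big(\fint_Q\ln\sigma\Big).
\]
Raise the first to the power $1/q$ and the second to the power $1/p'$, then multiply:
\[
\Big(\fint_Q w\Big)^{1/q}\Big(\fint_Q \sigma\Big)^{1/p'}\le [w]_{\Ainf}^{1/q}[\sigma]_{\Ainf}^{1/p'}\exp\Big(\fint_Q\Big(\tfrac1q\ln w+\tfrac1{p'}\ln\sigma\Big)\Big).
\]
Averaging the log inequality from the first step bounds the exponential by $C_0$. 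This gives $[(v,w)]_{A^\alpha_{p,q}}\le C_0C_1$ with $C_1=[w]_{\Ainf}^{1/q}[\sigma]_{\Ainf}^{1/p'}$, which depends only on the allowed data; the trivial dependence on $n,p,q,\alpha$ enters through the exponents and the critical relation.

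The main obstacle is justifying the integrability of $\ln w$ and $\ln\sigma$ over $Q$, so that $\fint_Q\ln w$ is meaningful and Jensen-type manipulations are legitimate. For $\Ainf$ weights, $\ln w\in L^1_{\rm loc}$: the upper part is controlled by $\ln^+ w\le w$, and the lower part by the finiteness of $[w]_{\Ainf}$ itself. If one prefers, one can instead use the equivalent characterization of $\Ainf$ by a reverse Jensen inequality with a small power, namely $\fint_Q w\lesssim(\fint_Q w^{\delta})^{1/\delta}$. In that route H\"older is applied to $w^{\delta/q}\sigma^{\delta/p'}$, which is pointwise $\le C_0^{\delta}$, and the constants then also depend on $\delta=\delta([w]_{\Ainf},[\sigma]_{\Ainf})$.
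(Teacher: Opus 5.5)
Your proof is correct. The paper gives no proof of this statement. It quotes it from \cite{MaSo} and uses it as a black box in Section~\ref{secc:theta:1}, so your argument replaces a citation rather than paralleling something in the text.

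The decisive observation is the right one. With the exponential--logarithmic form of $[\cdot]_{A_\infty}$, which is exactly how the paper defines $A_\infty$, the bound $\fint_Q w\le [w]_{\Ainf}\exp(\fint_Q\ln w)$ replaces each average by a geometric mean. Geometric means are multiplicative, so the pointwise bound $w^{1/q}\sigma^{1/p'}=w^{1/q}v^{-1/p}\le C_0$ passes straight through.

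This gives more than the statement asks:
\begin{enumerate}[(i)]
\item an explicit constant $C_1=[w]_{\Ainf}^{1/q}[v^{-p'/p}]_{\Ainf}^{1/p'}$, independent of $n$ and $\alpha$;
\item no use of $p\le q$ or $0<\alpha<n$; criticality only removes the factor $|Q|^{\Theta_{p,q}(\alpha)/n}$.
\end{enumerate}
Any dependence on $n$ would arise only if you started from a different definition of $A_\infty$ and had to convert to this one.

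Your justification that $\ln w,\ln\sigma\in L^1(Q)$ is adequate. You have $\ln^+ w\le w$, and nontrivial $A_\infty$ weights are positive a.e. Then finiteness of $(\fint_Q w)\exp(-\fint_Q\ln w)$ forces $\fint_Q\ln w>-\infty$.

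You should, however, drop the closing ``alternative route,'' because as described it does not work. H\"older's inequality bounds $\fint_Q (w^{1/q}\sigma^{1/p'})^{\delta}$ \emph{from above} by $(\fint_Q w^{\delta})^{1/q}(\fint_Q \sigma^{\delta})^{1/p'}$. You would need the reverse inequality, with that product of averages controlled by $\fint_Q (w^{1/q}\sigma^{1/p'})^{\delta}\le C_0^\delta$.

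That reverse inequality fails for general functions. Let $w=1$, $\sigma=\eps$ on one half of $Q$, and $w=\eps$, $\sigma=1$ on the other half. The pointwise product tends to $0$ as $\eps\to0$, while both averages stay $\ge 1/2$.

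To close that route you would have to invoke the $A_\infty$ property again in its multiplicative form, through $\exp\fint_Q\ln$. That is just your main argument.
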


For the subcritical case $\Theta_{p,q}(\alpha) > 0$, that is, $1/q > 1/p - \alpha/n$, we have

\begin{thm}[Theorem 1.2 from \cite{MaSo}]\label{thm:SW:Ainf:p:q} Fix $1 < p \leq q < \infty$ and $0 < \alpha < n$ such that $\Theta_{p,q}(\alpha) > 0$. Given $(v, w)$ with $w, v^{-p'/p} \in \Ainf$ the condition $w^{1/q}/v^{1/p} \in L^{n/\Theta_{p,q}(\alpha), \infty}(\rn)$ implies \eqref{def:Apq:Theta} with the estimate
\begin{equation}\label{wv:pq:thm:SW}
[(v, w)]_{A_{p,q}^{\alpha}} \leq C_2 \norm{w^{1/q}/v^{1/p}}{L^{n/\Theta_{p,q}(\alpha), \infty}(\rn)},
\end{equation}
where $C_2 > 0$ depends only on $n$, $\alpha$, $p$, $q$, $[w]_{\Ainf}$, and $[v^{-p'/p}]_{\Ainf}$. 
\end{thm}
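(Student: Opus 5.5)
The plan is to reduce the two-weight $A_{p,q}^\alpha$ condition to a one-weight geometric-mean estimate. We use the $\Ainf$ hypotheses through the reverse Jensen inequality built into $[\cdot]_{\Ainf}$, and then control the resulting geometric mean of $h:=w^{1/q}/v^{1/p}$ by its weak-$L^{n/\Theta_{p,q}(\alpha)}$ norm via Kolmogorov's inequality. Set $\sigma:=v^{-p'/p}$, and observe that $\sigma^{1/p'}=v^{-1/p}$. Throughout, write $\Theta:=\Theta_{p,q}(\alpha)>0$.

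\emph{Step 1 (reverse Jensen).} Fix a cube $Q$. By the definition of $[\cdot]_{\Ainf}$ applied to $w$ and to $\sigma$,
\begin{equation*}
\Big(\fint_Q w\Big)^{1/q}\Big(\fint_Q \sigma\Big)^{1/p'} \le [w]_{\Ainf}^{1/q}[\sigma]_{\Ainf}^{1/p'} \exp\Big(\fint_Q \Big(\tfrac{1}{q}\ln w+\tfrac{1}{p'}\ln \sigma\Big)\Big).
\end{equation*}
Since $\tfrac1q\ln w+\tfrac1{p'}\ln\sigma=\ln(w^{1/q}v^{-1/p})=\ln h$, the right-hand side equals $C\exp\big(\fint_Q\ln h\big)$. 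Here $\ln w$ and $\ln\sigma$ are integrable on $Q$: this is implicit in the finiteness of the $\Ainf$ constants, since $\fint_Q\ln w>-\infty$ and $\fint_Q \ln w\le \ln\fint_Q w<\infty$, and likewise for $\sigma$. So the exponent is a genuine average of an integrable function.

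\emph{Step 2 (Jensen with a small power).} Choose $0<\delta<n/\Theta$, for instance $\delta=n/(2\Theta)$. Jensen's inequality for the concave logarithm gives
\begin{equation*}
\exp\Big(\fint_Q\ln h\Big)\le\Big(\fint_Q h^\delta\Big)^{1/\delta}.
\end{equation*}

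\emph{Step 3 (Kolmogorov).} For $0<\delta<s:=n/\Theta$, the layer-cake formula yields the standard estimate
\begin{equation*}
\fint_Q h^\delta\le \frac{s}{s-\delta}\,|Q|^{-\delta/s}\norm{h}{L^{s,\infty}}^\delta .
\end{equation*}
To derive it, split $\int_0^\infty \delta\lambda^{\delta-1}\min\{|Q|,\norm{h}{L^{s,\infty}}^s\lambda^{-s}\}\,d\lambda$ at $\lambda_0=\norm{h}{L^{s,\infty}}|Q|^{-1/s}$. Hence
\begin{equation*}
\Big(\fint_Q h^\delta\Big)^{1/\delta}\le C_\delta\, |Q|^{-\Theta/n}\norm{h}{L^{n/\Theta,\infty}}.
\end{equation*}

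\emph{Step 4 (conclusion).} Multiplying the chain of inequalities from Steps 1--3 by $|Q|^{\Theta/n}$ cancels the power of $|Q|$. Taking the supremum over $Q$ then gives \eqref{wv:pq:thm:SW}, with $C_2$ depending only on $[w]_{\Ainf}$, $[\sigma]_{\Ainf}$, $p$, $q$, and, through $\delta$ and $\Theta$, on $n$ and $\alpha$.

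I do not expect a serious obstacle. The step requiring the most care is Step 1: one must make sure that the geometric means are well defined. This follows from the integrability of $\ln w$ and $\ln\sigma$ noted in Step 1, and even without it the case $\fint_Q\ln h=-\infty$ would be trivial. One must also check that the exponents combine exactly into $\ln h$, which is what makes the hypothesis on $w^{1/q}/v^{1/p}$ the natural one. The subcriticality $\Theta>0$ is used only to make $L^{n/\Theta,\infty}$ meaningful and to allow the choice $\delta<n/\Theta$.
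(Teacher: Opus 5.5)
The paper does not prove this statement; it quotes it from \cite{MaSo}, so there is no in-paper argument to compare yours with. Judged on its own, your proof is correct and complete.

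Step~1 is just the exp-log form of $[\cdot]_{\Ainf}$ that the paper adopts as its definition. The identity $\frac{1}{q}\ln w+\frac{1}{p'}\ln v^{-p'/p}=\ln\bigl(w^{1/q}v^{-1/p}\bigr)$ is exactly why the hypothesis is phrased in terms of $w^{1/q}/v^{1/p}$. The same two lines, with the pointwise bound $w^{1/q}\le C_0 v^{1/p}$ replacing Steps 2--3, also give Theorem~\ref{thm:SW:Ainf:p:q:alpha0=0}.

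Your Kolmogorov bound $\fint_Q h^\delta\le \frac{s}{s-\delta}|Q|^{-\delta/s}\norm{h}{L^{s,\infty}}^\delta$, for $0<\delta<s=n/\Theta$, is computed correctly. Your argument that $\ln w$ and $\ln v^{-p'/p}$ lie in $L^1(Q)$ is also adequate: the positive part is dominated by the weight, and the negative part is controlled by the finiteness of the $\Ainf$ constant.

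A minor sharpening: since $\bigl(s/(s-\delta)\bigr)^{1/\delta}\downarrow e^{1/s}$ as $\delta\to 0^+$, you may take $C_2=e^{\Theta_{p,q}(\alpha)/n}\,[w]_{\Ainf}^{1/q}\,[v^{-p'/p}]_{\Ainf}^{1/p'}$.
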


\subsection{On the anisotropic weights $u_{\theta_1, \theta_2, \theta_3}$}

Given $\theta_1, \theta_2, \theta_3 \in \re$, let $u_{\theta_1, \theta_2, \theta_3}$  be the anisotropic weight defined in \eqref{def:u:thetas}.

\begin{lem}\label{lemma:L1:w} Given $\theta_1, \theta_2, \theta_3 \in \re$ and $0< p<\infty$, then $u_{\theta_1, \theta_2, \theta_3} \in L^{p, \infty}(\rn)$ if and only if $\theta_1+\theta_2+ \theta_3 = -n/p$,  $\theta_1>-(n-1)/p$, and $\theta_3>-1/p$.

\end{lem}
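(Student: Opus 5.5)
The plan is to use homogeneity to pin down the sum, and then reduce the distribution function of $u:=u_{\theta_1,\theta_2,\theta_3}$ to an integral over the unit sphere. Write $\Theta:=\theta_1+\theta_2+\theta_3$. The weight satisfies $u(\lambda x)=\lambda^{\Theta}u(x)$ for $\lambda>0$, so its distribution function $d_u(\lambda):=|\{u>\lambda\}|$ obeys $d_u(\mu^{\Theta}\lambda)=\mu^{n}d_u(\lambda)$.

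\textbf{Necessity of $\Theta=-n/p$.} First I rule out $\Theta=0$. In that case $d_u(\lambda)$ is the measure of a cone $\{x: u(x/|x|)>\lambda\}$. This cone has positive measure for small $\lambda$, since $u>0$ a.e. on $\sn$, and so it has infinite measure. Hence $u\notin L^{p,\infty}$. If $\Theta\neq0$, the scaling relation gives $d_u(\lambda)=\lambda^{n/\Theta}d_u(1)$ for all $\lambda>0$, and $d_u(1)>0$. Finiteness of $\sup_\lambda \lambda^p d_u(\lambda)$ therefore forces $p+n/\Theta=0$, that is, $\Theta=-n/p$. This is the standard homogeneity argument, and it also shows that $\Theta=-n/p$ makes $\lambda^p d_u(\lambda)$ constant. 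So under this condition, $u\in L^{p,\infty}$ if and only if $d_u(1)<\infty$.

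\textbf{Computing $d_u(1)$.} In polar coordinates $x=\rho\omega$ we have $u(x)=\rho^{\Theta}\phi(\omega)$, where $\phi(\omega):=|\omega'|^{\theta_1}|\omega_n|^{\theta_3}$ (recall $|\omega|=1$). With $\Theta<0$, the condition $u>1$ means $\rho<\phi(\omega)^{-1/\Theta}=\phi(\omega)^{p/n}$. Hence
\[
d_u(1)=\frac1n\int_{\sn}\phi(\omega)^{p}\,d\sigma(\omega)=\frac1n\int_{\sn}|\omega'|^{p\theta_1}|\omega_n|^{p\theta_3}\,d\sigma(\omega).
\]
So everything reduces to deciding when this spherical integral is finite.

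\textbf{Finiteness of the spherical integral.} The integrand is singular only near the two places where a factor vanishes: the poles $\{\omega'=0\}$ and the equator $\{\omega_n=0\}$. Near each of these the other factor is comparable to $1$.

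Near the equator, $\omega_n$ serves as a one-dimensional transversal coordinate. Integrability of $|\omega_n|^{p\theta_3}$ there holds if and only if $p\theta_3>-1$.

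Near the poles, $\omega'\in\mathbb{R}^{n-1}$ serves as local coordinates via the graph $\omega_n=\pm\sqrt{1-|\omega'|^2}$, whose Jacobian is bounded near $0$. Integrability of $|\omega'|^{p\theta_1}$ near $\omega'=0$ in $\mathbb{R}^{n-1}$ holds if and only if $p\theta_1>-(n-1)$.

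A finite partition of unity on $\sn$ separating these two regions then gives $d_u(1)<\infty$ if and only if $\theta_1>-(n-1)/p$ and $\theta_3>-1/p$. Combining this with the first step proves both directions of Lemma \ref{lemma:L1:w}.

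\textbf{Where care is needed.} The main obstacle, though mild, is the local analysis on the sphere: one must check the coordinates and Jacobians carefully, and the case $n=2$ needs attention, where both singular sets are finite sets of points on the circle. Alternatively, one can avoid the sphere altogether by computing $d_u(1)$ directly in the cylindrical coordinates $(|x'|,x_n)$ after the scaling reduction, since $|x|\approx\max(|x'|,|x_n|)$.
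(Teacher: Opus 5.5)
Your argument is correct, but it takes a different route from the paper.

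\textbf{The paper's route.} It reduces to $p=1$ and splits $\rn$ into $\{|x'|\le|x_n|\}$ and $\{|x_n|\le|x'|\}$, where $|x|$ is comparable to $|x_n|$ and to $|x'|$ respectively. On each piece it replaces $u_{\theta_1,\theta_2,\theta_3}$ by a two-factor product $|x'|^{\alpha}|x_n|^{\beta}$. It then computes the distribution function by Fubini in the variables $(x',x_n)$, with a case analysis on the signs of $\alpha$, $\beta$ and $\alpha+\beta$. The condition $\alpha+\beta=-n$ and the one-sided conditions are read off case by case: $\beta<-1$ on the first piece (where $\beta=\theta_2+\theta_3$) and $\beta>-1$ on the second (where $\beta=\theta_3$).

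\textbf{Your route.} You use the exact homogeneity of $u$. The scaling identity $d_u(\lambda)=\lambda^{n/\Theta}d_u(1)$ yields $\Theta=-n/p$ at once, after you dispose of the cone case $\Theta=0$. All remaining information is then packed into the single angular integral $\int_{\sn}|\omega'|^{p\theta_1}|\omega_n|^{p\theta_3}\,d\sigma$. Its finiteness is a purely local question near the poles and the equator.

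\textbf{Comparison.} Your approach avoids the sign case analysis and the comparability constants, and it treats general $p$ directly. It also gives the exact value $\sup_{\lambda>0}\lambda^p d_u(\lambda)=\frac1n\int_{\sn}|\omega'|^{p\theta_1}|\omega_n|^{p\theta_3}\,d\sigma(\omega)$. Moreover, it shows that Lemma~\ref{lemma:L1loc:indices} rests on the same angular condition, since $\int_{B(0,R)}u\,dx=\int_0^R\rho^{\Theta+n-1}\,d\rho\int_{\sn}|\omega'|^{\theta_1}|\omega_n|^{\theta_3}\,d\sigma$. In exchange, the paper's computation stays entirely in cylindrical variables $(x',x_n)$ and needs no charts on the sphere.
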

 
\begin{proof} Since $u_{\theta_1, \theta_2, \theta_3} \in L^{p, \infty}(\rn)$ if and only if $u_{\theta_1, \theta_2, \theta_3} ^p\in L^{1, \infty}(\rn)$, it suffices to consider the case $p=1$. We want to characterize when the distribution function of  $u_{\theta_1, \theta_2, \theta_3}$ satisfies:
\begin{equation}\label{minusone}
\Big|\big\{x\in\mathbb R^n:u_{\theta_1, \theta_2, \theta_3}(x)>t\big\}\Big|\lesssim t^{-1}.
\end{equation}
For this purpose, we split this level set in two parts: when $|x'|\le|x_n|$, in which case $|x|\approx~|x_n|$ and hence $u_{\theta_1, \theta_2, \theta_3}(x)\approx |x'|^{\theta_1}|x_n|^{\theta_2+\theta_3}$, or $|x_n|\le|x'|$, getting now that $|x|\approx~|x'|$ and  $u_{\theta_1, \theta_2, \theta_3}(x)\approx |x'|^{\theta_1+\theta_2}|x_n|^{\theta_3}$. Thus, to simplify the notations, we consider the function $u(x)=|x'|^\alpha|x_n|^\beta$, with $\alpha,\beta\in\mathbb R$, and our goal now is to estimate the following integrals:
\begin{equation}\label{twointegrals}
I=\int_0^\infty\bigg(\int_{\big\{|x'|\le r:|x'|^\alpha r^\beta>t\big\}}dx'\bigg)\,dr\quad \text{and}\quad II=\int_{\mathbb R^{n-1}}\bigg(\int_{\big\{r\le |x'|:|x'|^\alpha r^\beta>t\big\}}dr\bigg)\,dx'.
\end{equation}
Set $s:=\alpha+\beta$. We start with $I$ and distinguish several cases:
\medskip

If $\alpha>0$ and $s\ge0$, it is easy to see that $I=\infty$. Assume now that $\alpha>0$ and $s<0$. Then 
\begin{equation}\label{onealphapos}
I\approx  \int_0^\infty\bigg(\int_{\big\{|x'|\le r:|x'|>\big(\frac{t}{r^\beta}\big)^{1/\alpha}\big\}}dx'\bigg)\,dr\approx
\int_0^{t^{1/s}}\bigg(r^{n-1}-\bigg(\frac{t}{r^\beta}\bigg)^{(n-1)/\alpha}\bigg)\,dr.
\end{equation}
The first term gives $t^{n/s}$ and \eqref{minusone} implies the condition $s=-n$. For the second term, we need that
$$
-\frac{\beta}{\alpha}(n-1)+1>0\iff \beta<\frac{s}{n}=-1,
$$
and hence 
$$
\int_0^{t^{1/s}}\bigg(\frac{t}{r^\beta}\bigg)^{(n-1)/\alpha}\,dr\approx t^{n/s}=t^{-1}.
$$
Therefore the integral in \eqref{onealphapos} is then comparable to $t^{-1}$ if and only if $s=-n$ and $\beta<-1$.
\medskip

If $\alpha=0$, then $s=\beta$. As in the previous case, it is easy to see that $I=\infty$ when $s\ge0$. Finally, if $s<0$, 
$$
I\approx\int_0^\infty\bigg(\int_{\big\{|x'|\le r:r<t^{1/s}\big\}}dx'\bigg)\,\approx\int_0^{t^{1/s}}r^{n-1}\approx t^{n/s}\approx t^{-1}\iff s=-n.
$$
\medskip

If $\alpha<0$, again $I=\infty$ when $s\ge0$.  If $s<0$,
$$
I\approx \int_0^{t^{1/s}}r^{n-1}\,dr+\int_{t^{1/s}}^\infty\bigg(\frac{t}{r^\beta}\bigg)^{(n-1)/\alpha}\,dr
$$
The first term is comparable to $t^{-1}$ if and only if $s=-n$ and the second term in integrable if and only if $-\frac{\beta}{\alpha}(n-1)+1<0\iff\beta<-1$, and the integral is comparable to $t^{n/s}=t^{-1}$. Therefore, putting all these different cases together, we conclude that 
\begin{equation}\label{firstintegral}
\begin{gathered}
I\approx\Big|\bigl\{x\in\mathbb R^n:|x'|\le|x_n|\text{ and } |x'|^\alpha|x_n|^\beta>t\bigr\}\Big|\lesssim t^{-1} \\
\Updownarrow \\
\alpha+\beta=-n\text{ and } \beta<-1.
\end{gathered}
\end{equation}
We now proceed to estimate the integral $II$ in \eqref{twointegrals}. Assume first that $\beta >0$. Again, when $s=0$ it turns out that $II=\infty$. When $s<0$,
$$
II\approx\int_{\{|x'|<t^{1/s}\}}\bigg(|x'|-\bigg(\frac{t}{|x'|^\alpha}\bigg)^{1/\beta}\bigg)\,dx'.
$$
The first term gives $t^{n/s}\approx t^{-1}\iff s=-n$, and the second integral is finite if and only if $-\frac{\alpha}{\beta}+n-1>0\iff\beta>-1$, in which case, the integral is equal to $t^{1/\beta+(-\alpha/\beta+n-1)/s}=t^{-1}$.
\medskip
It is easy to see that $II=\infty$ whenever $\beta=0$ and $\alpha\ge0$. If $\beta =0$ and $\alpha<0$, 
$$
II\approx\int_{\{|x'|<t^{1/\alpha}\}}|x'|\,d x'\approx t^{n/\alpha}= t^{-1}\iff s=\alpha=-n.
$$
Finally, if $\beta<0$ and $s\ge0$, the integral  $II$ is equal to infinity, and if $\beta<0$ and $s<0$,
$$
II\approx \int_{\{|x'|<t^{1/s}\}}|x'|\,dx'+\int_{\{|x'|\ge t^{1/s}\}}\bigg(\frac{t}{|x'|^\alpha}\bigg)^{1/\beta}\,dx'.
$$
The first term is comparable to $t^{n/s}$, which again gives that $s=-n$, while the second integral is finite if and only if $-\alpha/\beta+n-1<0\iff \beta>-1$, and hence the integral is comparable to $t^{{1/\beta}+(-\alpha/\beta+n-1)/s}=t^{-1}.$ Therefore, by putting all these different cases together, we conclude that 
\begin{equation}\label{secondintegral}
\begin{gathered}
II\approx\Big|\bigl\{x\in\mathbb R^n:|x_|\le|x'|\text{ and } |x'|^\alpha|x_n|^\beta>t\bigr\}\Big|\lesssim t^{-1} \\
\Updownarrow \\
\alpha+\beta=-n\text{ and } \beta>-1.
\end{gathered}
\end{equation}
By applying \eqref{firstintegral} and \eqref{secondintegral} to the function  $u_{\theta_1, \theta_2, \theta_3}$, for the integral $I$ we have that $\alpha=\theta_1$ and $\beta=\theta_2+\theta_3$ and similarly, for the second integral $II$, $\alpha=\theta_1+\theta_2$ and $\beta=\theta_3$. Hence,
\begin{align*}
u_{\theta_1, \theta_2, \theta_3}\in L^{1,\infty}(\mathbb R^n)&\iff \theta_1+\theta_2+\theta_3=-n,\ \theta_2+\theta_3<-1,\text{ and }\theta_3>-1\\
&\iff \theta_1+\theta_2+\theta_3=-n,\ \theta_1>-(n-1),\text{ and }\theta_3>-1.
\end{align*}
\end{proof}

\begin{lem}\label{lemma:L1loc:indices} $u_{\theta_1, \theta_2, \theta_3} \in L^1_{\rm{loc}}(\rn)$ if and only if  $\theta_1+\theta_2+ \theta_3 > -n$, $\theta_1>-(n-1)$, and $\theta_3>-1$. In other words, $u_{\theta_1, \theta_2, \theta_3} \in L^1_{\rm{loc}}(\rn)$ if and only if $(\theta_1, \theta_2, \theta_3) \in \mathcal{A}\cup\mathcal{B}.$
\end{lem}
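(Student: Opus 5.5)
Since $u_{\theta_1,\theta_2,\theta_3}$ is continuous and positive away from the coordinate set $\{x'=0\}\cup\{x_n=0\}$, and is homogeneous of degree $\theta_1+\theta_2+\theta_3$ under dilations $x\mapsto\lambda x$, local integrability is equivalent to $\int_{B(0,1)}u_{\theta_1,\theta_2,\theta_3}\,dx<\infty$ together with integrability near points of $\{x'=0\}\setminus\{0\}$ and $\{x_n=0\}\setminus\{0\}$. I will follow the splitting used in the proof of Lemma \ref{lemma:L1:w}. First, near the origin, I split into the regions $|x'|\le|x_n|$ and $|x_n|\le|x'|$. On the first, $u\approx|x'|^{\theta_1}|x_n|^{\theta_2+\theta_3}$, and on the second $u\approx|x'|^{\theta_1+\theta_2}|x_n|^{\theta_3}$. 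I then reduce to computing, for $u=|x'|^{a}|x_n|^{b}$,
\[
I=\int_0^1 r^{b}\int_{\{|x'|\le r\}}|x'|^{a}\,dx'\,dr,\qquad II=\int_{\{|x'|\le 1\}}|x'|^{a}\int_0^{|x'|}r^{b}\,dr\,dx'.
\]

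For $I$ (with $a=\theta_1$, $b=\theta_2+\theta_3$), the inner integral is finite if and only if $a>-(n-1)$, in which case it equals $c\,r^{a+n-1}$. The outer integral is then finite if and only if $a+b+n>0$. For $II$ (with $a=\theta_1+\theta_2$, $b=\theta_3$), the inner integral is finite if and only if $b>-1$, giving $c\,|x'|^{b+1}$. The outer integral is then finite if and only if $a+b+1>-(n-1)$, that is, $a+b>-n$. Combining, integrability on $B(0,1)$ holds if and only if $\theta_1>-(n-1)$, $\theta_3>-1$, and $\theta_1+\theta_2+\theta_3>-n$. Necessity of each condition follows because the comparisons $\approx$ are two-sided on cones of positive measure, so a divergent inner or outer integral forces $\int_{B(0,1)}u=\infty$.

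Next, away from the origin, say near a point $x_0\neq0$, the factor $|x|^{\theta_2}$ is bounded above and below. The remaining factors $|x'|^{\theta_1}$ and $|x_n|^{\theta_3}$ are then locally integrable near $x_0$ exactly under the one-dimensional and $(n-1)$-dimensional conditions $\theta_3>-1$ and $\theta_1>-(n-1)$. This holds whether $x_0$ lies on $\{x_n=0\}$ or on $\{x'=0\}$; the two sets meet only at the origin, so near $x_0$ at most one factor is singular. This gives no new constraint and reconfirms the necessity of these two conditions.

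Finally, I must check that this set of conditions equals $\mathcal A\cup\mathcal B$. If $\theta_2\ge0$, then $\theta_1+\theta_2+\theta_3>-n$ follows automatically from $\theta_1>-(n-1)$ and $\theta_3>-1$, which gives $\mathcal A$. If $\theta_2<0$, the sum condition is exactly the extra condition defining $\mathcal B$. The main obstacle is only the bookkeeping of necessity near the origin, namely making sure the cone comparisons hold uniformly; this is straightforward since on $|x'|\le|x_n|$ one has $|x_n|\le|x|\le\sqrt2|x_n|$, and similarly on the complementary region.
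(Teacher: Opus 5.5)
Your proof is correct and follows the same route as the paper. Like the paper, you split a ball into the cones $|x'|\le|x_n|$ and $|x_n|\le|x'|$, replace $u_{\theta_1,\theta_2,\theta_3}$ there by $|x'|^{\theta_1}|x_n|^{\theta_2+\theta_3}$ and $|x'|^{\theta_1+\theta_2}|x_n|^{\theta_3}$ respectively, and evaluate the iterated integrals via Tonelli. Your separate check near points $x_0\neq 0$ is harmless but redundant, since the paper simply works on $B(0,R)$ for arbitrary $R$ and homogeneity does the same job; your explicit verification that the three conditions describe $\mathcal{A}\cup\mathcal{B}$ spells out a step the paper leaves to the reader.
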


\begin{proof}
Similarly to the argument used in Lemma~\ref{lemma:L1:w}, we fix $R>0$ and split the integral of $u_{\theta_1, \theta_2, \theta_3}$ over the ball $B(0,R)$ in two cases: when $|x'|\le|x_n|$, and hence  $|x|\approx~|x_n|$ and  $u_{\theta_1, \theta_2, \theta_3}(x)\approx |x'|^{\theta_1}|x_n|^{\theta_2+\theta_3}$, or $|x_n|\le|x'|$, getting now that $|x|\approx~|x'|$ and  $u_{\theta_1, \theta_2, \theta_3}(x)\approx |x'|^{\theta_1+\theta_2}|x_n|^{\theta_3}$. Thus, we need to characterize the exponents $\theta_1, \theta_2, \theta_3$ for which these two terms are finite:
$$
I=\int_{\{x\in\mathbb R^n:|x|\le R,\ |x'|\le|x_n|\}}|x'|^{\theta_1}|x_n|^{\theta_2+\theta_3}\,dx\text{ and } II=\int_{\{x\in\mathbb R^n:|x|\le R,\ |x_n|\le|x'|\}}|x'|^{\theta_1+\theta_2}|x_n|^{\theta_3}\,dx.
$$
Now
\begin{align*}
I\approx\int_0^R\bigg(\int_{\{|x'|<t\}}|x'|^{\theta_1}\,dx'\bigg)t^{\theta_2+\theta_3}\,dt\approx\int_0^R\bigg(\int_0^t\rho^{\theta_1+n-2}\,d\rho\bigg)t^{\theta_2+\theta_3}\,dt,
\end{align*}
and the inner integral is finite if and only if $\theta_1>-(n-1)$ and we get
\begin{equation}\label{firstloc}
I\approx\int_0^Rt^{\theta_1+ n-1+\theta_2+\theta_3}\,dt<\infty\iff \theta_1>-(n-1)\text{ and }  \theta_1+\theta_2+\theta_3>-n.
\end{equation}
Similarly,
\begin{align*}
II\approx\int_{\{|x'|<R\}}\bigg(\int_0^{|x'|}t^{\theta_3}\,dt\bigg)|x'|^{\theta_1+\theta_2}\,dx'
\end{align*}
and the inner integral is finite if and only if $\theta_3>-1$ and we get
$$
II\approx\int_{\{|x'|<R\}}|x'|^{\theta_1+\theta_2+\theta_3+1}\,dx'<\infty\iff \theta_3>-(n-1)\text{ and }  \theta_1+\theta_2+\theta_3>-n,
$$
which, together with \eqref{firstloc}, proves de result.
\end{proof}

The next result comes from \cite{MZarXiv1} and identifies the triples of exponents $(\theta_{1},\theta_{2},\theta_{3})$ so that $u_{\theta_1, \theta_2, \theta_3} \in A_p(\rn)$. Namely, 

\begin{thm}[Theorem 3.2 from \cite{MZarXiv1}]\label{thm:Ainf:indices} Fix $1 < p < \infty$. Then, $u_{\theta_1, \theta_2, \theta_3} \in A_p(\rn)$ if and only if $(\theta_{1},\theta_{2},\theta_{3})\in (\mathcal{A}\cup\mathcal{B})\cap(\mathcal{C}_{p}\cup\mathcal{D}_{p})$.
\end{thm}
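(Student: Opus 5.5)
We need to prove Theorem 3.2 of Miao–Zhao: $u_{\theta_1,\theta_2,\theta_3}\in A_p(\rn)$ if and only if $(\theta_1,\theta_2,\theta_3)\in(\mathcal{A}\cup\mathcal{B})\cap(\mathcal{C}_p\cup\mathcal{D}_p)$. The plan is to reduce everything to local integrability via \eqref{w:Ap:Ainf} and a dilation argument.

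\emph{Necessity.} If $u:=u_{\theta_1,\theta_2,\theta_3}\in A_p$, then $u\in L^1_{\rm loc}$ and $u^{-p'/p}=u^{-1/(p-1)}\in L^1_{\rm loc}$. Note that $u^{-1/(p-1)}=u_{-\theta_1/(p-1),-\theta_2/(p-1),-\theta_3/(p-1)}$. Lemma \ref{lemma:L1loc:indices} then gives $(\theta_1,\theta_2,\theta_3)\in\mathcal{A}\cup\mathcal{B}$. Applied to the negated, rescaled triple, it gives $-\theta_1/(p-1)>-(n-1)$, $-\theta_3/(p-1)>-1$ and $-(\theta_1+\theta_2+\theta_3)/(p-1)>-n$. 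By the simplified description of $\mathcal{C}_p\cup\mathcal{D}_p$ recorded in the introduction, these three conditions say exactly that the triple lies in $\mathcal{C}_p\cup\mathcal{D}_p$.

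\emph{Sufficiency.} Both $u$ and $\sigma:=u^{-1/(p-1)}$ are locally integrable by Lemma \ref{lemma:L1loc:indices}. Each is also homogeneous of degree $\theta_1+\theta_2+\theta_3$, resp. $-(\theta_1+\theta_2+\theta_3)/(p-1)$, and both are invariant under rotations in $x'$ and under $x_n\mapsto -x_n$. Homogeneity makes the quantity $\big(\fint_Q u\big)\big(\fint_Q\sigma\big)^{p-1}$ dilation invariant, because the two degrees cancel. So it suffices to bound this product for cubes of side length $1$.

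I would split such cubes into two types. If $Q$ meets the ball $B(0,10\sqrt n)$, then $Q\subset B(0,C)$, and local integrability on this fixed ball bounds both averages, since $|Q|=1$.

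If $Q$ stays far from the origin, then $|x|\approx R$ on $Q$ for some $R\gtrsim 1$. The factor $|x|^{\theta_2}$ is then essentially constant on $Q$, and it contributes $R^{\theta_2}R^{-\theta_2}=1$ to the product. What remains is the weight $|x'|^{\theta_1}|x_n|^{\theta_3}$, which is a product weight in the variables $x'\in\re^{n-1}$ and $x_n\in\re$. Its $A_p$ constant over a cube is controlled by the $A_p$ constants of $|x'|^{\theta_1}$ on $\re^{n-1}$ and of $|x_n|^{\theta_3}$ on $\re$. These are finite exactly when $-(n-1)<\theta_1<(n-1)(p-1)$ and $-1<\theta_3<p-1$, which follows from the necessity conditions already established, by the classical power-weight criterion. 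When $n=2$ one simply reads $x'\in\re$.

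The main obstacle is making the separation step rigorous. One must check that the cube product factorizes into the product of the $(n-1)$-dimensional and one-dimensional averages. This is immediate, since $Q=Q'\times I$ and the remaining weight is a tensor product. One must also check that the $|x|^{\theta_2}$ factor is uniformly comparable to $R^{\theta_2}$ on $Q$, which holds because $\operatorname{dist}(0,Q)\ge 9\sqrt n\,\ell(Q)$.
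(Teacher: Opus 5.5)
Your argument is correct. The paper itself contains no proof of this statement, since it quotes the result from Miao--Zhao \cite{MZarXiv1}, so there is no internal argument to compare against. Your route is a self-contained proof.

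It makes the theorem an essentially immediate consequence of Lemma~\ref{lemma:L1loc:indices}, which the paper does prove:
\begin{itemize}
\item \emph{Necessity} is exactly local integrability of $u$ and of $u^{-1/(p-1)}=u_{-\theta_1/(p-1),-\theta_2/(p-1),-\theta_3/(p-1)}$.
\item \emph{Sufficiency} uses homogeneity of degree $\theta_1+\theta_2+\theta_3$, which cancels in the $A_p$ product, to reduce to unit cubes. Cubes near the origin are controlled by that same local integrability. Cubes at distance at least $10\sqrt n$ from the origin are handled by freezing $|x|^{\theta_2}$ and using the tensor structure of $|x'|^{\theta_1}|x_n|^{\theta_3}$ together with the classical power-weight criterion in $\re^{n-1}$ and $\re$.
\end{itemize}
This explains conceptually why the answer is ``$u$ and $u^{-1/(p-1)}$ are both locally integrable'', which the introduction only hints at. It also bypasses the split according to the sign of $\theta_2$ that is built into the four classes $\mathcal{A},\mathcal{B},\mathcal{C}_p,\mathcal{D}_p$.

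A few points to tidy up:
\begin{itemize}
\item The factorization $Q=Q'\times I$ requires axis-parallel cubes. This is the standard convention, and dilations about the origin preserve it.
\item In the necessity step, say explicitly that $u>0$ a.e.\ forces $\fint_Q u>0$, so finiteness of the $A_p$ product gives $\fint_Q u^{-1/(p-1)}<\infty$ on every cube.
\item In the far-from-origin case, the ranges $-(n-1)<\theta_1<(n-1)(p-1)$ and $-1<\theta_3<p-1$ are simply part of the hypothesis. They come from the simplified descriptions of $\mathcal{A}\cup\mathcal{B}$ and $\mathcal{C}_p\cup\mathcal{D}_p$, which the paper states without proof but which are a one-line check. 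They are not derived from the necessity direction, as your wording suggests.
\end{itemize}
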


\section{A unifying approach towards weighted inequalities}\label{sec:approach}

Our strategy for the proofs of Theorems \ref{thm:t/s} and \ref{thm:main} will be based on Theorems \ref{coro:2.10:DD23} and \ref{thm:2.14:DD23} below, which provide an abstract framework for weighted interpolation inequalities. In particular, Theorem \ref{coro:2.10:DD23} represents an improvement on \cite[Corollary 2.10]{DD23}, whereas Theorem \ref{thm:2.14:DD23} is precisely Theorem 2.14 from \cite{DD23} whose statement has been included for the reader's convenience. The aforementioned improvement in Theorem \ref{coro:2.10:DD23} relies on the following lemma. 

\begin{lem}\label{lemma:GammaRS} Fix $S \subset \sn$ with $\mathcal{H}^{n-1}(S) > 0$,  $R> 0$, and define 
$$
\Gamma_R^S:=\{x \in \rn: |x| \geq R \text{ and } x/|x| \in S\}.
$$ 
Then, $w(\Gamma_R^S)  = \infty$ for every $w\in A_\infty(\rn)$.
\end{lem}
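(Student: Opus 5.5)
We will prove that $w(\Gamma_R^S)=\infty$ for $w\in A_\infty(\rn)$ by comparing $w(\Gamma_R^S)$ with $w$ of large balls, using the $A_\infty$ property in its quantitative form. The property we use is the following: there exist $C,\delta>0$ such that for every cube (or ball) $Q$ and every measurable $E\subset Q$,
$$
\frac{|E|}{|Q|}\leq C\left(\frac{w(E)}{w(Q)}\right)^{\delta}.
$$
This follows from the standard equivalences for $A_\infty$; see the reference to \cite{GrafakosBook} in Section \ref{sec:Ap:weights}. Equivalently, if $|E|\geq \eta|Q|$, then $w(E)\geq c(\eta)\,w(Q)$ with $c(\eta)=(\eta/C)^{1/\delta}>0$.

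The key steps are as follows.

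\textbf{Step 1 (geometry).} For $k\in\na$ let $B_k:=B(0,2^kR)$, and set $E_k:=\Gamma_R^S\cap\big(B_k\setminus B(0,2^{k-1}R)\big)$. In polar coordinates,
$$
|E_k|=\mathcal{H}^{n-1}(S)\int_{2^{k-1}R}^{2^kR}\rho^{n-1}\,d\rho = \eta\,|B_k|,
$$
where $\eta:=\mathcal{H}^{n-1}(S)(1-2^{-n})/(n|B(0,1)|)>0$ is independent of $k$. If we prefer cubes, we replace $B_k$ by the cube $Q_k$ centered at $0$ with side length $2^{k+1}R$, which contains $B_k$. Then $|E_k|\geq \eta'|Q_k|$ with $\eta'>0$ independent of $k$.

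\textbf{Step 2 (doubling/$A_\infty$ comparison).} Step 1 gives $w(E_k)\geq c\,w(Q_k)$ with $c>0$ uniform in $k$. Since the sets $E_k$ are pairwise disjoint and contained in $\Gamma_R^S$,
$$
w(\Gamma_R^S)\geq \sum_k w(E_k)\geq c\sum_k w(Q_k)\geq c\sum_k w(Q_1).
$$

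\textbf{Step 3 (nontriviality).} We must have $w(Q_1)>0$. An $A_\infty$ weight is positive a.e.; this follows from the $A_\infty$ condition, since $\exp(\fint\ln w)$ cannot vanish when the quotient in \ref{def:Ainf} is finite. Hence $w(Q_1)>0$, the sum diverges, and $w(\Gamma_R^S)=\infty$.

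The only potential obstacle is making sure the $A_\infty$ inequality is available in the form $|E|/|Q|\lesssim (w(E)/w(Q))^\delta$, given that the paper defines $A_\infty$ through the exponential-average condition. We will cite the equivalence from \cite{GrafakosBook}. Alternatively, we can use $A_\infty=\bigcup_p A_p$: for $w\in A_p$, Hölder's inequality directly gives $(|E|/|Q|)^p\leq [w]_{A_p}\,w(E)/w(Q)$, which is all Step 2 needs.
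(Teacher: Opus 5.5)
Your proof is correct, but it reaches infinity by a different mechanism than the paper.

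The paper works with the single truncated cone $E_{R,R_1}=\Gamma_R^S\cap B(0,R_1)$. The lower $A_\infty$ bound, routed somewhat superfluously through a comparison with the power weight $|x|^\alpha$, gives $w(E_{R,R_1})\gtrsim w(B(0,R_1))$. The reverse doubling estimate $w(B(0,R_1))\ge c_1(R_1/R)^{\eps_1}w(B(0,R))$, which comes from the \emph{upper} $A_\infty$ bound, then makes this blow up as $R_1\to\infty$.

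You instead split $\Gamma_R^S$ into disjoint dyadic annular sectors $E_k$, each filling a fixed proportion of $Q_k$, and sum $w(E_k)\gtrsim w(Q_k)\ge w(Q_1)$. This uses only the one-sided estimate $|E|/|Q|\lesssim (w(E)/w(Q))^\delta$. As you point out, that estimate is a one-line consequence of H\"older's inequality for $w\in A_p$. Combined with $A_\infty=\bigcup_p A_p$, your proof is self-contained from facts stated in the paper and needs neither reverse doubling nor the auxiliary weight.

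The price is only quantitative. The paper's route yields polynomial growth $w(\Gamma_R^S\cap B(0,R_1))\gtrsim (R_1/R)^{\eps_1}w(B(0,R))$, while yours gives growth of order $\log(R_1/R)\,w(Q_1)$. That difference is irrelevant for the lemma.

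Your Step 3 also makes explicit a nondegeneracy condition that the paper uses tacitly: its final inequality needs $w(B(0,R))>0$. Strictly, you only need $w\not\equiv 0$. Since the $Q_k$ exhaust $\rn$, you can start the sum at the first $k_0$ with $w(Q_{k_0})>0$.
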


\begin{proof} It is well known that given $u \in A_\infty(\rn)$ there exist constants $0 < C_1 \leq C_2 $ and $0 < \theta_2 \leq \theta_1$ (depending only on $[u]_{A_\infty(\rn)}$ and $n$) such that 
\begin{equation}\label{u:vs:L}
C_1 \left(\frac{|E|}{|B|} \right)^{\theta_1} \leq  \frac{u(E)}{u(B)} \leq C_2 \left(\frac{|E|}{|B|} \right)^{\theta_2} 
\end{equation}
for every ball $B$ and every measurable subset $E \subset B$ (see for instance the proof of Lemma~9.2.1 as well as Theorem 9.3.3(d) from \cite{GrafakosBook}). Fix $\alpha > -n$ and set $w_\alpha(x):=|x|^\alpha$ so that $w_\alpha \in A_\infty(\rn)$ (see \cite[Example 9.1.7]{GrafakosBook}). The second inequality in \eqref{u:vs:L} applied to $w_\alpha$ gives
\begin{equation}\label{walpha:L}
 \frac{w_\alpha(E)}{w_\alpha(B)} \leq C_2 \left(\frac{|E|}{|B|} \right)^{\theta_2}
\end{equation}
for some $C_2, \theta_2$ depending only on $\alpha$ and $n$. Now, given $w \in A_\infty$, the first inequality in \eqref{u:vs:L} applied to $w$ gives
\begin{equation}\label{w:L}
C_1 \left(\frac{|E|}{|B|} \right)^{\theta_1} \leq  \frac{w(E)}{w(B)}
\end{equation}
where $C_1, \theta_1$ depend only on $[w]_{A_\infty(\rn)}$ and $n$. By combining \eqref{walpha:L} and \eqref{w:L} we obtain constants $C > 0$ and $\theta> 0$ such that
\begin{equation}\label{w:walpha}
\frac{w(E)}{w(B)} \geq C \left(\frac{w_\alpha(E)}{w_\alpha(B)}\right)^\theta
\end{equation}
for every ball $B$ and every measurable subset $E \subset B$. On the other hand, the second inequality in \eqref{u:vs:L} applied to $w$ and to concentric balls $B(x, r) \subset B(x, s)$ for $x \in \rn$ and $0 < r < s$ gives
\begin{equation}\label{RD:w}
\frac{w(B(x, s))}{w(B(x, r))} \geq c_1 \left(\frac{s}{r} \right)^{\eps_1}
\end{equation}  
for $c_1, \eps_1 > 0$ depending only on $[w]_{A_\infty(\rn)}$ and $n$. Now, given $S \subset \sn$ with $\mathcal{H}^{n-1}(S) > 0$, $R> 0$, and $R_1 > R$ set 
$$
E_{R, R_1} := \Gamma_R^S \cap B(0, R_1),
$$
so that \eqref{w:walpha} applied to $E_{R, R_1}$ and $B(0, R_1)$ implies
\begin{equation}\label{w:E:R:R1}
\frac{w(E_{R, R_1})}{w(B(0, R_1))} \geq C \left(\frac{w_\alpha(E_{R, R_1})}{w_\alpha(B(0, R_1))}\right)^\theta.
\end{equation} 
By changing to polar coordinates we can write
\begin{align*}
w_\alpha(E_{R, R_1}) = \int_{E_{R, R_1}} |x|^\alpha \dx = \mathcal{H}^{n-1}(S) \int_R^{R_1} \rho^{\alpha + n - 1} \, d\rho = \frac{\mathcal{H}^{n-1}(S)}{\alpha + n}(R_1^{\alpha + n} - R^{\alpha + n})
\end{align*}
and 
\begin{align*}
w_\alpha(B(0, R_1)) = \int_{B(0, R_1)} |x|^\alpha \dx = \mathcal{H}^{n-1}(\sn) \int_0^{R_1} \rho^{\alpha + n - 1} \, d\rho = \frac{\mathcal{H}^{n-1}(\sn)}{\alpha + n} R_1^{\alpha + n}.
\end{align*}
Thus, 
\begin{equation}\label{walpha:ERR1}
\left(\frac{w_\alpha(E_{R, R_1})}{w_\alpha(B(0, R_1))}\right)^\theta = \left(\frac{\mathcal{H}^{n-1}(S)}{\mathcal{H}^{n-1}(\sn)}\right)^\theta \left(1 - \left( \frac{R}{R_1}\right)^{\alpha +n} \right)^\theta.
\end{equation}
Finally, from the inequality \eqref{RD:w} used with $x=0$ and $0 < R < R_1$ we obtain 
$$
\frac{w(B(0, R_1))}{w(B(0, R))} \geq c_1 \left(\frac{R_1}{R} \right)^{\eps_1},
$$
which combined with \eqref{w:E:R:R1} and \eqref{walpha:ERR1} yields
\begin{align*}
w(E_{R, R_1}) & \geq C w(B(0, R_1)) \left(\frac{w_\alpha(E_{R, R_1})}{w_\alpha(B(0, R_1))}\right)^\theta\\
& \geq c_1 C \left(\frac{R_1}{R} \right)^{\eps_1} w(B(0,R)) \left(\frac{w_\alpha(E_{R, R_1})}{w_\alpha(B(0, R_1))}\right)^\theta \\
&= c_1 C  \left(\frac{R_1}{R} \right)^{\eps_1} w(B(0,R)) \left(\frac{\mathcal{H}^{n-1}(S)}{\mathcal{H}^{n-1}(\sn)}\right)^\theta \left(1 - \left( \frac{R}{R_1}\right)^{\alpha +n} \right)^\theta,
\end{align*}
and $w(\Gamma_R^S)  = \infty$ follows by letting $R_1 \to \infty$.\end{proof}

\begin{thm}[Case $\theta =1$, weighted Sobolev embedding]\label{coro:2.10:DD23} Let $1 < p \leq q < \infty$, $t <s < t +n$. Suppose that $(v,w) \in A_{p,q}^{s-t}$, and that $w, v^{-p'/p} \in \Ainf$. Then,
$$
\norm{D^t f}{L^q(w)} \lesssim \norm{D^s f}{L^p(v)}, \quad \forall f \in D^{>\tau} \mathcal{S}(\rn),
$$
with $\tau:= \max\{-n, -t\}$ and 
$
D^{> \tau}:=\{f \in L^\infty: f = D^bg\text{ for some } g \in \mathcal{S}(\rn) \text{ and } b > \tau\}.
$
\end{thm}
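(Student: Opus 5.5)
Writing $\alpha:=s-t\in(0,n)$, we reduce to a weighted fractional integral estimate. For $f\in D^{>\tau}\mathcal{S}(\rn)$, say $f=D^b g$ with $g\in\mathcal{S}(\rn)$ and $b>\tau$, set $h:=D^s f = D^{s+b}g$. Then formally $D^t f = I_\alpha h$, where $I_\alpha$ is the Riesz potential $I_\alpha h(x)=c_{n,\alpha}\int |x-y|^{\alpha-n}h(y)\,dy$, whose Fourier multiplier is $|\xi|^{-\alpha}$. The desired inequality then becomes
$$
\norm{I_\alpha h}{L^q(w)}\lesssim \norm{h}{L^p(v)}.
$$
The Sawyer--Wheeden type theorem (as used in \cite{DD23}) gives exactly this under $(v,w)\in A_{p,q}^{\alpha}$ with $w, v^{-p'/p}\in\Ainf$. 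Concretely, the two-weight bound for $I_\alpha$ follows from the $A_{p,q}^\alpha$ condition on the pair plus the reverse-doubling and $A_\infty$ properties of $w$ and $\sigma:=v^{-p'/p}$. This is the route of \cite[Corollary 2.10]{DD23}, and I would invoke their Theorem (Sawyer--Wheeden with $A_\infty$ hypotheses) directly.

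The key steps are as follows.
\begin{enumerate}[(1)]
\item Justify the identity $D^t f=I_\alpha(D^s f)$ pointwise for $f\in D^{>\tau}\mathcal{S}$. The constraint $b>\tau=\max\{-n,-t\}$ ensures that $\widehat{D^t f}=|\xi|^{t+b}\hat g$ is integrable near $0$, so $D^tf$ is a bounded continuous function. Likewise $h=D^{s+b}g$ decays like $|x|^{-n-(s+b)}$, so the Riesz integral converges absolutely, and the identity follows by the Fourier transform (distributionally, then pointwise).
\item Apply the two-weight Sawyer--Wheeden estimate for $I_\alpha$, which requires $0<\alpha<n$ (this is the role of $t<s<t+n$).
\item Verify that whatever extra hypothesis \cite[Corollary 2.10]{DD23} imposed is automatically satisfied. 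This is where Lemma \ref{lemma:GammaRS} enters.
\end{enumerate}

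I expect step (3) to be the main obstacle and the real content of the improvement. The standard Sawyer--Wheeden theorem in the $A_\infty$ setting requires a reverse doubling condition, or nondegeneracy such as $w(\rn)=\sigma(\rn)=\infty$, or infinite mass of the weights on cones/annuli, so that the dyadic/cube decomposition does not leak mass at infinity. I suspect \cite{DD23} assumed such a condition explicitly, e.g.\ that $w$ and $v^{-p'/p}$ have infinite integral over every cone $\Gamma_R^S$. The fix is to observe that Lemma \ref{lemma:GammaRS} gives $w(\Gamma_R^S)=\sigma(\Gamma_R^S)=\infty$ for every $S$ of positive measure and every $R>0$, simply because $w,\sigma\in\Ainf$. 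Hence that hypothesis is redundant.

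So the proof would state \cite[Corollary 2.10]{DD23}, check each of its hypotheses against ours, and use Lemma \ref{lemma:GammaRS} applied to both $w$ and $v^{-p'/p}$ to discharge the extra one. I would also note, via \eqref{u:vs:L}, that reverse doubling of both weights holds automatically.
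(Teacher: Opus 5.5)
Your proposal is correct and matches the paper's proof: the statement is \cite[Corollary 2.10]{DD23} minus its extra hypothesis that $w(\Gamma_R^S)=\infty$ for every $S\subset\sn$ with $\mathcal{H}^{n-1}(S)>0$ and every $R>0$, and Lemma \ref{lemma:GammaRS} shows this hypothesis holds automatically for $w\in\Ainf$. The paper needs the cone condition only for $w$, not for $v^{-p'/p}$ (applying it to both is harmless), and your Riesz-potential steps (1)--(2) are already part of the cited corollary, so they need not be redone.
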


\begin{proof} The conclusion of Theorem \ref{coro:2.10:DD23} has been proved in \cite[Corollary 2.10]{DD23} under the additional assumption that $w(\Gamma_R^S) = \infty$ for every $S \subset \sn$ with $\mathcal{H}^{n-1}(S) > 0$ and  $R> 0$. Now, in view of Lemma \ref{lemma:GammaRS}, such assumption is automatically fulfilled by the fact that $w \in \Ainf$. \end{proof}

\begin{thm}[Theorem 2.14 from \cite{DD23}, case $\theta=t/s$]\label{thm:2.14:DD23} Let $1 < p, q, r < \infty$ and $0 < t < s$ be related by
\begin{equation}\label{rel:index:thmDD23}
\frac{1}{r}= \frac{t}{sp} +\left(1 -\frac{t}{s}\right)\frac{1}{q}.
\end{equation}
Let $\Sigma:= \{z \in \com: 0 < \mathcal{R}(z) < 1\}$ and suppose there is a family of weights $\{w_z\}_{z \in \overline{\Sigma}}$ satisfying the following conditions:
\begin{enumerate}[(i)]
\item\label{cond1:2.14} there is $h \in L^1_{\rm{loc}}(\rn)$ such that $|w_z| \leq h(x)$ for every $x \in \rn$ and $z \in \overline{\Sigma}$,
\item\label{cond2:2.14} for a.e. $x \in \rn$ the function $z \mapsto w_z(x)$ is continuous in $\overline{\Sigma}$ and analytic in $\Sigma$,
\item\label{cond3:2.14} and there are weights $w_0, w_1$ such that $|w_{i\tau}(x)| \leq w_0(x)$ and $|w_{1+i\tau}(x)| \leq w_1(x)$ for every $x \in \rn$ and $\tau \in \re$.
\end{enumerate}
Moreover, assume that there are weights $u, v, w$ such that  $w=w_{t/s}$, $(u^p, w_1^p) \in A_{p,p}$, $(v^q, w_0^q) \in A_{q,q}$, and $u^{-p'}, w_1^{p}, v^{-q'}, w_0^q \in \Ainf$. Then,
$$
\norm{w D^t f}{L^r} \lesssim \norm{u D^s f}{L^p}^{t/s} \norm{vf}{L^q}^{1-t/s}, \quad \forall f \in \mathcal{S}(\rn). 
$$
\end{thm}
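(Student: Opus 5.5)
The plan is to prove the estimate by Stein's complex interpolation (Hadamard three lines lemma) along the analytic family $z \mapsto w_z\, D^{sz} f$ on the strip $\overline{\Sigma}$. At $z=t/s$ this family gives $w\,D^t f$. On the line $\mathcal{R}(z)=0$ it gives $w_{i\tau} D^{is\tau} f$, and on the line $\mathcal{R}(z)=1$ it gives $w_{1+i\tau} D^{is\tau}(D^s f)$.

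\textbf{Setup.} Fix $f \in \mathcal{S}(\rn)$ and, by duality, a simple compactly supported $g$ with $\norm{g}{L^{r'}} \le 1$. Put $\theta := t/s$; by \eqref{rel:index:thmDD23} we have $1/r' = (1-\theta)/q' + \theta/p'$. Define
\[
g_z := |g|^{r'\left(\frac{1-z}{q'}+\frac{z}{p'}\right)} \operatorname{sgn} g, \qquad F(z) := \int_{\rn} w_z(x)\, D^{sz} f(x)\, g_z(x) \dx ,
\]
so that $F(\theta) = \int w\, D^t f\, g$.

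\textbf{Step 1: analyticity and admissible growth of $F$.} For $0 \le \mathcal{R}(z) \le 1$ the multiplier $|\xi|^{sz}$ is locally integrable. Hence $D^{sz} f$ is bounded and continuous in $x$, uniformly for $z$ in compact subsets, and it depends analytically on $z$ in $\Sigma$. Since $g_z$ is bounded with fixed compact support, condition \eqref{cond1:2.14} ($|w_z|\le h \in L^1_{\rm loc}$) together with dominated convergence and Morera's theorem give continuity of $F$ on $\overline{\Sigma}$ and analyticity in $\Sigma$, using \eqref{cond2:2.14}. The same bounds show that $F$ grows at most polynomially in $|\mathcal{I}(z)|$, which is admissible for the three lines lemma.

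\textbf{Step 2: boundary estimates.} For $z=i\tau$, condition \eqref{cond3:2.14} and H\"older's inequality give
\[
|F(i\tau)| \le \norm{w_0 D^{is\tau} f}{L^q}\, \norm{g_{i\tau}}{L^{q'}} = \norm{w_0 D^{is\tau} f}{L^q}.
\]
The operator $D^{is\tau}$ is a Mihlin multiplier, hence a Calder\'on--Zygmund operator with constants $\lesssim (1+|\tau|)^{N}$ for some $N=N(n)$. It therefore admits sparse domination, and the two-weight mixed $A_q$--$A_\infty$ bound for sparse operators (Hyt\"onen--Lacey type) yields
\[
\norm{w_0 T f}{L^q} \lesssim [(v^q,w_0^q)]_{A_{q,q}}^{1/q}\left([w_0^q]_{A_\infty}^{1/q'} + [v^{-q'}]_{A_\infty}^{1/q}\right) \norm{v f}{L^q}.
\]
The hypotheses $(v^q,w_0^q)\in A_{q,q}$ and $w_0^q, v^{-q'} \in \Ainf$ are exactly what this bound requires. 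Consequently $|F(i\tau)| \lesssim (1+|\tau|)^N \norm{vf}{L^q}$.

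In the same way, writing $D^{s(1+i\tau)} f = D^{is\tau}(D^s f)$ and using $(u^p,w_1^p)\in A_{p,p}$ with $w_1^p, u^{-p'}\in\Ainf$, we get $|F(1+i\tau)| \lesssim (1+|\tau|)^N \norm{u D^s f}{L^p}$.

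\textbf{Step 3: conclusion.} Apply the three lines lemma in its polynomial-growth (Hirschman) form. This gives $|F(\theta)| \lesssim \norm{u D^s f}{L^p}^{\theta} \norm{vf}{L^q}^{1-\theta}$. Taking the supremum over $g$ finishes the proof.

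\textbf{Main obstacle.} I expect Step 2 to be the hardest part. Two things must be secured there: the two-weight boundedness of the imaginary powers $D^{is\tau}$ under only the joint $A_{q,q}$ condition plus separate $A_\infty$ hypotheses, and the fact that the dependence on $\tau$ stays polynomial. My first attempt is the sparse-domination route described above, which reduces everything to the known mixed $A_p$--$A_\infty$ two-weight estimate for sparse forms. A secondary technical point is the justification in Step 1 that $D^{sz}f$ is well behaved near $\mathcal{R}(z)=0$; this is handled by the Schwartz decay of $\hat f$.
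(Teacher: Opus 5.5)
The paper gives no proof of this statement (it is quoted from Duarte--Drumond Silva for the reader's convenience). Your argument is correct: it is exactly the complex-interpolation scheme that hypotheses (i)--(iii) are designed for, namely the three lines lemma applied to $F(z)=\int_{\rn} w_z\, D^{sz}f\, g_z \dx$, with the lines $\mathcal{R}(z)=0$ and $\mathcal{R}(z)=1$ controlled by two-weight $A_p$--$A_\infty$ bounds, of polynomial growth in $\tau$, for the imaginary powers $D^{is\tau}$.

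Three minor points could be tightened:
\begin{enumerate}[(a)]
\item $F$ is in fact bounded on $\overline{\Sigma}$, because $|D^{sz}f|\le \int_{\rn}(1+|\xi|^s)|\widehat{f}(\xi)|\,d\xi$ uniformly in $z$.
\item Applying sparse domination to the non-compactly supported functions $f$ and $D^sf$ needs a routine truncation-and-Fatou step.
\item With the paper's normalization of $A_{q,q}$, the joint characteristic enters your mixed bound with exponent $1$, not $1/q$; the scaling $w_0 \mapsto \lambda w_0$ shows this.
\end{enumerate}
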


\section{Proof of Theorem \ref{thm:t/s}}\label{sec:proof:thm:t/s}

Fix $1 < p, q, r < \infty$ and $0 < t < s$ related by \eqref{rel:index:thmDD23}. For $z \in \com$ with $0~\leq~\mathcal{R}(z)~\leq~1$ define
\begin{equation*}
w_z(x) := |x'|^{z \alpha_1 + (1-z) \alpha_0} |x|^{z \beta_1 + (1-z) \beta_0}|x_n|^{z \gamma_1 + (1-z) \gamma_0}
\end{equation*}
so that
\begin{equation*}
|w_z(x)| \leq |x'|^{\mathcal{R}(z) \alpha_1 + (1-\mathcal{R}(z)) \alpha_0} |x|^{\mathcal{R}(z) \beta_1 + (1-\mathcal{R}(z)) \beta_0} |x_n|^{\mathcal{R}(z) \gamma_1 + (1-\mathcal{R}(z)) \gamma_0}.
\end{equation*}
In particular, if $z= i\tau$ with $\tau \in \re$, then
\begin{equation*}
|w_{i \tau}(x)| \leq w_0(x) =  |x'|^{\alpha_0} |x|^{\beta_0} |x_n|^{\gamma_0}
\end{equation*}
and if $z=1+ i \tau$ with $\tau \in \re$
\begin{equation*}
|w_{1+i\tau}(x)|\leq w_1(x)= |x'|^{\alpha_1} |x|^{\beta_1}|x_n|^{\gamma_1}.
\end{equation*}
Let us check that the hypotheses from Theorem \ref{thm:2.14:DD23} hold true with the choice $u:=~w_1$ and $v:=w_0$. From the definition of the classes $A_{p,q}$ in \eqref{def:Apq},  notice that $(u^p, w_1^p) = (w_1^p, w_1^p)  \in A_{p,p}$ means $w_1^p \in A_p(\rn)$, which by \eqref{w:Ap:Ainf} is equivalent to $w_1^p, w_1^{-p'} \in \Ainf$. Now, since
$
w_1^p(x):= |x'|^{\alpha_1p} |x|^{\beta_1p}|x_n|^{\gamma_1p},
$
by virtue of Theorem \ref{thm:Ainf:indices} $w_1^p \in A_p(\rn)$ amounts to $p (\alpha_1, \beta_1, \gamma_1) \in (\mathcal{A}\cup\mathcal{B})\cap(\mathcal{C}_{p}\cup\mathcal{D}_{p})$, which is hypothesis \eqref{p:1:p}. Similarly, the hypothesis \eqref{q:0:q}, that is,  $q (\alpha_0, \beta_0, \gamma_0) \in (\mathcal{A}\cup\mathcal{B})\cap(\mathcal{C}_{q}\cup\mathcal{D}_{q})$ guarantees $w_0^q \in A_q(\rn)$ and therefore $(v^q, w_0^q)=(w_0^q, w_0^q) \in A_{q,q}$ as well as $w_0^{-q'}, w_0^q \in \Ainf$.  Finally, let us find $h \in L^1_{\rm{loc}}(\rn)$ such that $|w_z| \leq h$ for every $z \in \overline{\Sigma}$. By Young's inequality, 
\begin{align*}
 |x'|^{\mathcal{R}(z) \alpha_1 + (1-\mathcal{R}(z)) \alpha_0} & \leq |x'|^{\alpha_1} + |x'|^{\alpha_0} =:h_\alpha(x),\\
  |x|^{\mathcal{R}(z) \beta_1 + (1-\mathcal{R}(z)) \beta_0} & \leq |x|^{\beta_1} + |x|^{\beta_0}=:h_\beta(x),\\
 |x_n|^{\mathcal{R}(z) \gamma_1 + (1-\mathcal{R}(z)) \gamma_0} & \leq |x_n|^{\gamma_1} + |x_n|^{\gamma_0}=:h_\gamma(x). 
 \end{align*}
Thus, $|w_z| \leq h_\alpha h_\beta h_\gamma =:h$ with $h \in L^1_{\rm{loc}}(\rn)$ due to Lemma \ref{lemma:L1loc:indices} and the hypothesis \eqref{aj:bk:gl:L1loc}. All the hypotheses of Theorem \ref{thm:2.14:DD23} are then met and we obtain
\begin{equation*}
\norm{w_{t/s} D^t f}{L^r} \lesssim \norm{w_1 D^s f}{L^p}^{t/s} \norm{w_0 f}{L^q}^{1-t/s}, \quad \forall f \in \mathcal{S}(\rn), 
\end{equation*}
which is precisely \eqref{GN:theta:t/s}. \qed

\section{Proof of Theorem \ref{thm:theta=1}}\label{secc:theta:1} 

We will use Theorems \ref{thm:SW:Ainf:p:q:alpha0=0}, \ref{thm:SW:Ainf:p:q}, and \ref{coro:2.10:DD23} with $w :=u_{\theta_1, \theta_2, \theta_3}$ and $v:= u_{\sigma_1, \sigma_2, \sigma_3}$.  Indeed, by Theorem \ref{thm:Ainf:indices}, the hypothesis \eqref{sigma:theta:Ainf} means that $u_{\theta_1, \theta_2, \theta_3}, u_{\sigma_1, \sigma_2, \sigma_3}^{-p'/p} \in \Ainf$, that is, $w, v^{-p'/p} \in \Ainf$. Next, we check that $(v,w) \in A_{p,q}^{\alpha}$ where $\alpha := s-t \in (0,n)$. Indeed, if $\Theta_{p,q}(s-t) = 0$, then \eqref{theta:j:q:sigma:j:p} yields $w^{1/q} = v^{1/p}$ which together with Theorem \ref{thm:SW:Ainf:p:q:alpha0=0} (with $C_0 =1$) implies $[(v, w)]_{A_{p,q}^{s-t}} \leq  C_1$, where $C_1 > 0$ depends only on $n$, $p$, $q$, $s$, $t$, $\theta_1, \theta_2,$ $\theta_3, \sigma_1, \sigma_2, \sigma_3$. If $\Theta_{p,q}(s-t) > 0$, let us see that \eqref{sum:sigmas:thetas:alpha} yields $w^{1/q}/v^{1/p} \in L^{n/\Theta_{p,q}(s-t), \infty}(\rn)$ and then \eqref{wv:pq:thm:SW} will follow from Theorem \ref{thm:SW:Ainf:p:q}. Notice that $w^{1/q}/v^{1/p} \in L^{n/\Theta_{p,q}(s-t), \infty}(\rn)$ is equivalent to $\left(w^{1/q}/v^{1/p}\right)^{n/\Theta_{p,q}(s-t)} \in L^{1, \infty}(\rn)$ and by Lemma \ref{lemma:L1:w} we have that 
$$
\left(w^{1/q}/v^{1/p}\right)^{n/\Theta_{p,q}(s-t)} (x) = (|x'|^{\theta_1/q - \sigma_1/p} |x|^{\theta_2/q - \sigma_2/p} |x_n|^{\theta_3/q - \sigma_3/p})^{n/\Theta_{p,q}(s-t)} \in L^{1, \infty}(\rn),
$$
if and only if
\begin{align*}
\frac{n}{\Theta_{p,q}(s-t)}\left(\frac{\theta_1+ \theta_2 + \theta_3}{q}- \frac{\sigma_1+ \sigma_2 + \sigma_3}{p} \right) = -n,\\
 \frac{n}{\Theta_{p,q}(s-t)} \left(\frac{\theta_1}{q }- \frac{\sigma_1}{p}\right) > -(n-1),\\
 \frac{n}{\Theta_{p,q}(s-t)} \left(\frac{\theta_3}{q }- \frac{\sigma_3}{p}\right) > -1,
\end{align*}
which are precisely the conditions \eqref{sum:sigmas:thetas:alpha}, \eqref{sum:sigmas:thetas:alpha:2}, and \eqref{sum:sigmas:thetas:alpha:3}.  \qed

\section{Proof of Theorem \ref{thm:main}}\label{sec:inter:theta} 

The proof is a combination of Theorems \ref{thm:t/s} and \ref{thm:theta=1}.  By Theorem \ref{thm:t/s} applied  to $1 < p, q, a <\infty$ and $0 < \theta s < s$, so that the hypothesis \eqref{rel:apqtheta} becomes \eqref{rel:pqrst} (think ``$r=s$'' and ``$t=\theta s$'' in the notation from Theorem~\ref{thm:t/s}) and since $\alpha_0, \alpha_1, \beta_0,  \beta_1, \gamma_0, \gamma_1 \in \re$ satisfy \eqref{p:1:p}, \eqref{q:0:q}, and  \eqref{aj:bk:gl:L1loc} we get
\begin{equation}\label{GN:theta:t/s:2}
\norm{u_{\alpha_\theta, \beta_\theta, \gamma_\theta} D^{\theta s} f}{L^a} \lesssim \norm{u_{\alpha_1, \beta_1, \gamma_1} D^s f}{L^p}^{\theta} \norm{u_{\alpha_0, \beta_0, \gamma_0} f}{L^q}^{1-\theta}, \quad \forall f \in \mathcal{S}(\rn),
\end{equation}
where $(\alpha_\theta, \beta_\theta, \gamma_\theta) :=  (\alpha_1, \beta_1, \gamma_1) \theta +  (\alpha_0, \beta_0, \gamma_0)(1-\theta)$. 

Next, the idea is to apply Theorem \ref{thm:theta=1} with $1 < a \leq r < \infty$ and $t < \theta s < t +n$ (think ``$p=a$'', ``$q=r$'', and ``$s=\theta s$'' in the notation from Theorem \ref{thm:theta=1}) to obtain the inequality
\begin{equation}\label{aux:2}
\norm{u_{\theta_1/r, \theta_2/r, \theta_3/r} D^t f}{L^r} \lesssim \norm{u_{\sigma_1/a, \sigma_2/a, \sigma_3/a} D^{\theta s} f}{L^a},
\end{equation}
(recall Remark \ref{rmk:weights:inside}) which then determines the $\sigma_j$'s as 
\begin{equation}\label{sigmas:thetas}
(\sigma_1, \sigma_2, \sigma_3):= a (\alpha_\theta, \beta_\theta, \gamma_\theta),
\end{equation}
so that $\norm{u_{\alpha_\theta, \beta_\theta, \gamma_\theta} D^{\theta s} f}{L^a} = \norm{u_{\sigma_1/a, \sigma_2/a, \sigma_3/a} D^{\theta s} f}{L^a}$ and \eqref{aux:2} can be linked with \eqref{GN:theta:t/s:2}.

If $\Theta_{a,r}(\theta s - t) = 0$, we use Theorem \ref{thm:theta=1}\eqref{Theta =0} since the hypothesis \eqref{cond:delta:gamma:1:2:B:1} gives
$$
-\frac{a'}{a}(\sigma_1, \sigma_2, \sigma_3) = - a' (\alpha_\theta, \beta_\theta, \gamma_\theta)  \in \mathcal{A}\cup\mathcal{B}
$$
 as well as
 $$
 (\theta_1, \theta_2, \theta_3) := \frac{r}{a} (\sigma_1, \sigma_2, \sigma_3)=r (\alpha_\theta, \beta_\theta, \gamma_\theta)  \in \mathcal{A}\cup\mathcal{B},
 $$
 and \eqref{aux:2} follows from Theorem \ref{thm:theta=1}\eqref{Theta =0}. Thus, by combining \eqref{GN:theta:t/s:2} and \eqref{aux:2}, the inequality  \eqref{GN:E:0} is proved since $(\alpha_\theta, \beta_\theta, \gamma_\theta) = \frac{1}{r}(\theta_1, \theta_2, \theta_3)$ from the above definition of  $(\theta_1, \theta_2, \theta_3)$.
 
If $\Theta_{a,r}(\theta s - t) > 0$, given $\alpha_\theta', \beta_\theta', \gamma_\theta'$ as in \eqref{sum:sigmas:thetas:alpha:a:r}, \eqref{sum:sigmas:thetas:alpha:a:r:2} and \eqref{sum:sigmas:thetas:alpha:a:r:3} now define
$$
 (\theta_1, \theta_2, \theta_3) := r (\alpha_\theta', \beta_\theta', \gamma_\theta')
$$
and use Theorem \ref{thm:theta=1}\eqref{Theta >0} with \eqref{sum:sigmas:thetas:alpha:a:r}, \eqref{sum:sigmas:thetas:alpha:a:r:2} and \eqref{sum:sigmas:thetas:alpha:a:r:3}   playing the role of \eqref{sum:sigmas:thetas:alpha}, \eqref{sum:sigmas:thetas:alpha:2}, and \eqref{sum:sigmas:thetas:alpha:3} to obtain \eqref{aux:2}, always with the $\sigma_j$'s as in \eqref{sigmas:thetas}. Hence, \eqref{GN:E:1} follows from  \eqref{GN:theta:t/s:2} and \eqref{aux:2}.  \qed

\end{document}